\newcommand{\AKMSbracket}[1]{\left[#1\right]}
\newcommand{\AKMSpara}[1]{\left(#1\right)}
\newcommand{\K}{\mathbb K}
\newcommand{\A}{\mathcal{A}}
\newcommand{\corps}{\mathbb{K}}
\newcommand{\real}{\mathbb{R}}
\newcommand{\N}{\mathcal{N}}
\newcommand{\dl}{\displaystyle}
\author{ Abdenacer Makhlouf }
\title{On Deformations of  $n$-Lie  algebras}
\institute{ Universit\'e de Haute-Alsace, Laboratoire de math\'ematiques, Informatique et Applications 4 rue des Fr\`eres Lumi\`ere, 68093 Mulhouse, France,\\ \email{abdenacer.makhlouf@uha.fr}
}
\begin{document} 

\maketitle

%
%
\begin{abstract}
The aim of this paper is to review the deformation theory of  $n$-Lie algebras. We summarize the 1-parameter formal deformation theory and provide a generalized approach using any unital commutative associative algebra as a deformation base. Moreover, we discuss  degenerations and quantization of $n$-Lie algebras.

\end{abstract}


\section*{Introduction}
The purpose of this paper  is to provide a survey on deformations of $n$-Lie algebras.
Deformation is one of the oldest techniques used by mathematicians and physicists. The first instances of the so-called deformation theory were given by Kodaira and Spencer for complex structures and by Gerstenhaber for associative algebras. Abstract deformation theory and deformation functors in algebraic geometry were inspired and developed in the works of Andr\'e, Deligne, Goldman, Grothendick, Illusie, Laudal, Lichtenbaum, Milson, Quillen, Schlessinger, and Stasheff. Among concrete deformation theory developed by Gerstenhaber for associative algebras and later with Schack for bialgebras, the Lie algebras case was studied by Nijenhuis and Richardson and then by Fialowski and her collaborators in a more general framework. Deformations of $n$-ary algebras was considered in several papers.
Deformation theory is the study of a family in the neighborhood of a given element. Intuitively, a deformation of a mathematical object is a family of the same kind of objects depending on some parameters. The main and popular tool is the power series ring or more generally any commutative algebras. By standard facts of deformation theory, the infinitesimal deformations of an algebra of a given type are parametrized by a second cohomology of the algebra. More generally, it is stated that deformations are controlled by a suitable cohomology. Deformations help to construct new objects starting from a given object and to infer some of its properties. They can also be useful for classification problems.
A modern approach, essentially due to Quillen, Deligne, Drinfeld, and Kontsevich, is that, in characteristic zero, every deformation problem is controlled by a differential graded Lie algebra, via solutions of Maurer-Cartan equation modulo gauge equivalence.
 Some mathematical formulations of quantization are based on the algebra of observables and consist in replacing the classical algebra of observables (typically complex-valued smooth functions on a Poisson manifold) by a noncommutative one constructed by means of an algebraic formal deformations of the classical algebra. In 1997, Kontsevich solved a longstanding problem in mathematical physics, that is every Poisson manifold admits formal quantization which is canonical up to a certain equivalence.
Deformation theory has been applied as a useful tool in the study of many other mathematical structures in Lie theory, quantum groups, operads, and so on. Even today it plays an important role in many developments of contemporary mathematics, especially in representation theory.

 The $n$-ary algebraic structures, which are natural generalizations of binary operations,  appeared naturally in
various domains of theoretical and mathematical physics. Indeed,
theoretical physics progress of quantum mechanics and the discovery
of the Nambu mechanics  (1973) see \cite{Nambu}, as well as a work
of S. Okubo \cite{Okubo} on Yang-Baxter equation gave  impulse to a
significant development on $n$-ary algebras.  The $n$-ary operations
appeared first through cubic matrices which were introduced in the
nineteenth century by Cayley. The cubic matrices were considered
again and generalized by Kapranov, Gelfand, Zelevinskii in 1994 see
\cite{KapranovGelfandZelinski} and Sokolov in 1972 see
\cite{Sokolov}.
 Another recent motivation to study $n$-ary operation comes for string
theory and M-Branes where  appeared  naturally a so called Bagger-Lambert algebra algebra involving a
ternary operation  \cite{BL2007}.
Hundred of papers are dedicated to Bagger-Lambert algebra.  For
other physical applications see \cite{Kerner, Kerner2, Kerner3,
Kerner4}.

The first conceptual generalization of binary algebras was the
ternary algebras introduced  by Jacobson \cite{Jacobson} in
connection with problems from Jordan theory and quantum mechanics,
he defined the Lie triple systems. A Lie triple system consists of a
space of linear operators on vector space $V$ that is closed under
the ternary bracket $[x,y,z]_T=[[x,y],z]$, where $[x,y]=x y-y x$.
Equivalently, the Lie triple system may be viewed as a subspace of
the Lie algebra closed relative to the ternary product. A Lie triple
system arose also in the study of symmetric spaces \cite{loosSym}.
More generally, we distinguish two kinds of generalization of binary
Lie algebras. Firstly, $n$-ary Lie algebras in which the Jacobi
identity is generalized by considering a cyclic summation over
$\mathcal{S}_{2n-1}$ instead of $\mathcal{S}_3$, see \cite{Hanlon}
\cite{Michor} and secondly $n$-ary Nambu algebras in which the
fundamental identity generalizes the fact that the adjoint maps are
derivations. The corresponding identity is called fundamental identity and it  appeared first in Nambu
mechanics \cite{Nambu}, the abstract definition of $n$-ary Nambu
algebras or $n$-Lie algebras (when the bracket is skew
symmetric) was given by Fillipov in 1985, see \cite{Filippov}  and  \cite{Takhtajan,Takhtajan1} for the algebraic
formulation of the Nambu mechanics. The Leibniz $n$-ary algebras
were introduced and studied in \cite{CassasLodayPirashvili}.

This article is organized as follows. In   the first Section  we summarize the definitions
of $n$-ary  algebras of Lie type and associative type, and provide some classical
examples. Moreover, we discuss the representations of $n$-Lie algebras.  In the second section, we review homological algebra tools and define the cohomology for $n$-Lie algebra that suits with deformation theory. The third section is dedicated to one-parameter formal deformations based on formal power series. We also describe the case where the parameter no longer commutes with the original algebra.  In section 4, we present a more general approach based on any commutative associative algebra, generalizing to $n$-Lie algebras,  the approach developed by Fialowski and her collaborators for Lie algebras. Section 5 deals with algebraic varieties of $n$-Lie algebras and degenerations. In the last section, we discuss $n$-Lie-Poisson algebras and quantization.

\section{Definitions and Examples of $n$-Lie algebras and other types of $n$-ary algebras}
Throughout this paper, $\K$ is a field of characteristic zero and  $\mathcal{N}$
is a  $\K$-vector space.
\subsection{$n$-Lie algebras}
In this section, we provide basics on  $n$-Lie algebras which are also called  Filippov $n$-ary algebras or Nambu-Lie algebras.
\begin{definition}
An $n$-\emph{Lie algebra} is a pair
$(\mathcal{N},[\cdot,\dots,\cdot])$, consisting of a vector space $\mathcal{N}$ and  an
$n$-linear map $[~,\cdots,~]:\mathcal{N}^{\times n}\rightarrow \mathcal{N}$ satisfying
\begin{eqnarray}\label{NambuIdentity}
 [ x_{1},..., x_{ n-1},
[x_{n},...,x_{2n-1}]]=
 \sum_{i=n}^{2n-1}{[x_{n},...,x_{i-1}, [ x_{1},\cdots,x_{n-1},x_{i}],
x_{i+1}, ...,x_{2n-1}],}
\end{eqnarray}%
and \begin{equation}
[x_{\sigma (1)}, \cdots, x_{\sigma (n)}]=Sgn(\sigma )[x_{1}, \cdots,
x_{n}],\quad \forall \sigma \in \mathcal{S}_{n}\text{ and }\forall
x_{1},\cdots,x_{n}\in \mathcal{N}
\end{equation}%
where $\mathcal{S}_{n}$ stands for the permutation group on $n$
elements and $sgn(\sigma)$ denotes the signature of $\sigma$.
\end{definition}
We call  condition (\ref{NambuIdentity})  Nambu
identity, it is also called fundamental identity or Filippov
identity.

\begin{remark}
Let $(\mathcal{N},[\cdot,\dots,\cdot])$ be an $n$-Lie algebra. Let
$x=(x_1,\ldots,x_{n-1})\in \mathcal{N}^{n-1}$ and $y\in \mathcal{N}$. Let $L_x$ be a
linear map on $\mathcal{N}$,
 defined   by  
 \begin{equation}\label{adjointMap}
 L_{x}(y)=[x_{1},\cdots,x_{n-1},y].
 \end{equation}

Then the Nambu identity maybe written
\begin{equation*}
L_{x}( [x_{n},...,x_{2n-1}])= \sum_{i=n}^{2n-1}{[x_{n},...,x_{i-1},
L_{x}(x_{i}), x_{i+1} ...,x_{2n-1}].}
\end{equation*}%
\end{remark}

Let  
$(\mathcal{N},\mu)$ and $(\mathcal{N},\nu)$ be two $n$-ary operations, 
 $\mu,\nu :\mathcal{N}^{ n}\rightarrow \mathcal{N}$. We define a  $(2n-1)$-ary operation $\mu\circ\nu$ by 
\begin{eqnarray}\label{GerstCircle}
 \mu\circ\nu (x_{1},..., x_{ n-1},
x_{n},...,x_{2n-1})=\mu(x_{1},..., x_{ n-1},
\nu(x_{n},...,x_{2n-1}))\\ -
 \sum_{i=n}^{2n-1}{\mu(x_{n},...,x_{i-1}, \nu( x_{1},\cdots,x_{n-1},x_{i}),
x_{i+1}, ...,x_{2n-1}).}
\end{eqnarray}
Then, an $n$-ary operation $\mu$ on a vector space $\mathcal{N}$ satisfies  Nambu identity if and only if $\mu\circ\mu=0$.

Morphisms of $n$-Lie algebras  are defined as
follows.

\begin{definition}
Let $(\mathcal{N},[\cdot,\dots,\cdot])$ and $(\mathcal{N}',[\cdot,\dots,\cdot]')$ be two
$n$-Lie algebras. A linear map $\rho: \mathcal{N}\rightarrow \mathcal{N}'$ is an  $n$-Lie algebras morphism  if it satisfies
\begin{equation*}\rho ([x_{1},\cdots,x_{2n-1}])=
[\rho (x_{1}),\cdots,\rho (x_{2n-1})]' \quad \forall i=1,n-1.
\end{equation*}
\end{definition}

\begin{example}
The polynomial algebra of  $3$ variables $x_{1},x_{2},x_{3},$ with
the bracket defined by the functional jacobian:

\begin{equation}\label{jacobian}
\lbrack f_{1},f_{2},f_{3}]=\left\vert
\begin{array}{ccc}
\frac{\delta f_{1}}{\delta x_{1}} & \frac{\delta f_{1}}{\delta
x_{2}} &
\frac{\delta f_{1}}{\delta x_{3}} \\
\frac{\delta f_{2}}{\delta x_{1}} & \frac{\delta f_{2}}{\delta
x_{2}} &
\frac{\delta f_{2}}{\delta x_{3}} \\
\frac{\delta f_{3}}{\delta x_{1}} & \frac{\delta f_{3}}{\delta
x_{2}} &
\frac{\delta f_{3}}{\delta x_{3}}%
\end{array}%
\right\vert ,
\end{equation}%
is a 3-Lie algebra.
\end{example}

We have also this fundamental example :
\begin{example}

Let $V=\mathbb{R}^4$ be the 4-dimensional oriented euclidian space over $\mathbb{R}$. The bracket of 3 vectors $\overset{\rightarrow }{x_{1}},\overset{\rightarrow }{x_{2}},\overset%
{ \rightarrow }{x_{3}}$   is given by :
\begin{equation*}
\lbrack \overset{\rightarrow }{x_{1}},\overset{\rightarrow }
{x_{2}},\overset{\rightarrow }{x_{3}} ]=\overset{\rightarrow
}{x_{1}}\times \overset{\rightarrow }{x_{2}} \times
\overset{\rightarrow }{x_{3}}=\left\vert
\begin{array}{c}
\begin{array}{cccc}
x_{11} & x_{12} & x_{13} & \overset{\rightarrow }{e_{1}} \\
x_{21} & x_{22} & x_{23} & \overset{\rightarrow }{e_{2}}%
\end{array}
\\
\begin{array}{cccc}
x_{31} & x_{32} & x_{33} & \overset{\rightarrow }{e_{3}}%
\end{array}
\\
\begin{array}{cccc}
x_{41} & x_{42} & x_{43} & \overset{\rightarrow }{e_{4}}%
\end{array}%
\end{array}%
\right\vert
\end{equation*}
where $(x_{1r},...,x_{4r})_{r=1,2,3}$ are the coordinates of  $%
\overset{\rightarrow }{x_{r}}$ with respect to orthonormal basis $\{e_{r}\}$.
Then,  $(V,[.,.,.])$ is a 3-Lie algebra.
\end{example}
\begin{remark}
Every 3-Lie algebra on $\mathbb{R}^4$ could be deduced from the previous example (see \cite{Gautheron1}).
\end{remark}

\subsection{  $n$-ary algebras of associative type}
There are several possible generalizations of binary associative algebras. A typical example is the ternary product of rectangular matrices introduced by Hestenes \cite{Hestenes} defined for  $A,B,C\in \mathcal{M}_{n,m}$ by $AB^*C$ where $B^*$ is the conjugate transpose.

Consider an $n$-ary operation   $ m:\N \otimes \cdots  \otimes \N \rightarrow \N$ or 
 $ m:\N\times \cdots \times \N \longrightarrow \N$. 
  The $n$-ary  operation is said to be
\emph{symmetric} (resp. \emph{skew-symmetric}) if
\begin{equation}
m(x_{\sigma (1)}\otimes \cdots \otimes x_{\sigma (n)})=
m(x_{1}\cdots\otimes x_{n}), \quad\forall \sigma \in \mathcal{S}_{n}\text{ and }%
\forall x_{1},\cdots, x_{n}\in \N,
\end{equation}
resp.
\begin{equation}
m(x_{\sigma (1)}\otimes \cdots\otimes x_{\sigma
(n)})=Sgn(\sigma
)m(x_{1}\otimes \cdots\otimes x_{n}),\quad\forall \sigma \in \mathcal{S}_{n}\text{ and }%
\forall x_{1},\cdots,x_{n}\in \N,
\end{equation}
where $Sgn(\sigma )$ denotes the signature of the permutation
$\sigma\in \mathcal{S}_{n}$.

It  is said to be commutative if
\begin{equation}
\sum_{\sigma \in \mathit{S}_{n}}{\ Sgn(\sigma )
m(x_{\sigma(1)}\otimes \cdots \otimes x_{\sigma (n)}) =0},\
\quad \forall x_{1},\cdots,x_{n}\in \N.
\end{equation}
\begin{remark}
A symmetric ternary operation is commutative.
\end{remark}

We have the following type of "associative" ternary operations.

\begin{definition}
A totally associative $n$-ary algebra is given by a $\mathbb{K}$-vector
space $\N$\ and an $n$-ary operation $m$ satisfying, for all $x_1,\cdots,x_{2n-1}\in \N$,
\begin{equation*}
 m(m(x_1\otimes \cdots \otimes x_n)\otimes \cdots\otimes x_{2n-1}) =m(x_1\otimes\cdots \otimes x_i \otimes m(x_{i+1}\otimes\cdots\otimes x_{i+n})\otimes\cdots  x_{2n-1})\   \forall i.
\end{equation*}
\end{definition}
\begin{example}
Let $\{e_1,e_2\}$ be a basis of a 2-dimensional space  $\N=\mathbb{K
}^2$, the ternary  operation on $\N$ given by
\begin{equation*}
\begin{array}{ll}
\begin{array}{lll}
m(e_{1}\otimes  e_{1}\otimes  e_{1}) & = & e_{1} \\
m(e_{1}\otimes  e_{1}\otimes  e_{2}) & = & e_{2} \\
m(e_{1}\otimes  e_{2}\otimes  e_{2}) & = & e_{1}+e_{2} \\
m(e_{2}\otimes  e_{1}\otimes  e_{1}) & = & e_{2}%
\end{array}
&
\begin{array}{lll}
m(e_{2}\otimes  e_{2}\otimes  e_{1}) & = & e_{1}+e_{2} \\
m(e_{2}\otimes  e_{2}\otimes  e_{2}) & = & e_{1}+2e_{2} \\
m(e_{1}\otimes  e_{2}\otimes  e_{1}) & = & e_{2} \\
m(e_{2}\otimes  e_{1}\otimes  e_{2}) & = & e_{1}+e_{2}%
\end{array}%
\end{array}
\end{equation*}

defines a totally associative ternary algebra.
\end{example}
\begin{definition}
A weak totally associative $n$-ary algebra is given by a
$\mathbb{K}$-vector space $\N$\ and a ternary operation $m$,
satisfying for all 
$x_1,\cdots,x_{2n-1}\in \N$%
,
\begin{eqnarray*}
m(m(x_1\otimes \cdots \otimes x_n)\otimes \cdots\otimes x_{2n-1})  =m(x_1\otimes \cdots \otimes m(x_{n}\otimes \cdots\otimes x_{2n-1})).
\end{eqnarray*}
\end{definition}

Naturally, any totally associative $n$-ary algebra is a weak totally
associative $n$-ary  algebra.

\begin{definition}
A partially associative  $n$-ary algebra is given by a $\mathbb{K}$-vector
space $\N $ and an  $n$-ary operation $m$ satisfying, for all  $x_1,\cdots,x_{2n-1}\in \N$,
\begin{eqnarray*}
\sum_{i=0}^{n-1}m(x_1\otimes\cdots \otimes x_i \otimes m(x_{i+1}\otimes\cdots\otimes x_{i+n})\otimes\cdots  x_{2n-1})=0.
\end{eqnarray*}
\end{definition}

\begin{example}
Let $\{e_1,e_2\}$ be a basis of a 2-dimensional space
  $\N=\mathbb{K }^2$, the ternary  operation on $\N$ given by
$
m(e_{1}\otimes  e_{1}\otimes  e_{1})=e_{2}
$
defines a partially  associative ternary algebra.
\end{example}


\begin{remark}
Let $(\N,\cdot)$ be a bilinear associative algebra. Then, the $n$-ary
operation, defined by
$
m(x_1\otimes \cdots\otimes x_3)=x_1\cdot ... \cdot x_n
$
determines on the vector space $\N$ a structure of totally associative
$n$-ary algebra which is not partially associative.
\end{remark}

The category of totally (resp. partially) $n$-ary algebras is encoded  by non-symmetric operad denoted $tAs^{(n)}$ (resp. $pAs^{(n)}$). The space on $p$-ary non-symmetric operations of  $tAs^{(n)}$ is given by $tAs_{i n-i-1}^{(n)}=\K$, $tAs_{p}^{(n)}=0$ otherwise. If we put the degree $k-2$ on the generating operation of $pAs^{(n)}$, then the non-symmetric  operads $tAs^{(n)}$ and  $pAs^{(n)}$  are Koszul dual to each other. Moreoever, the Koszulity can be proved by the rewriting method \cite{LodayValete-Operad}.\\

There is another generalization of Jacobi condition that leads to another type of $n$-ary Lie algebra.
\begin{definition}
An $n$-ary Lie algebras is a skew-symmetric $n$-ary operation $[~,\cdots,~]$ on a $\mathbb K$-vector space $\N$ satisfying $\forall x_{1},\cdots, x_{2n-1}\in \N$ the
following generalized Jacobi condition
\begin{align*}
\sum_{\sigma \in \mathit{S}_{2n-1}}{\ Sgn(\sigma )[[x_{\sigma
(x_{1})},\cdots,x_{\sigma (x_{n-1})}],
x_{\sigma(x_{n})},\cdots, x_{\sigma (x_{2n-1})}]} =0.
\end{align*}
\end{definition}

As in the binary case, there is a functor which makes correspondence to any
partially associative $n$-ary algebra an $n$-ary Lie algebra (see \cite
{Gnedbaye1,Gnedbaye2}).
\begin{proposition}To any partially associative $n$-ary algebra on a
vector space $\N$ with $n$-ary operation $m$, one associates an
$n$-ary Lie algebra on $\N$ defined $\forall x_{1},\cdots, x_{n}\in \N$ by the bracket
\begin{equation}
[x_{1},\cdots ,x_{n}] =\underset{\sigma \in \mathit{S}_{n}}{\sum Sgn(}\sigma
)m(x_{\sigma (1)}\otimes \cdots \otimes x_{\sigma (n)}).
\end{equation}

\end{proposition}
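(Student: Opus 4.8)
The plan is to check the two axioms of an $n$-ary Lie algebra for the antisymmetrized bracket
$$[x_1,\ldots,x_n]=\sum_{\sigma\in \mathcal{S}_n}\mathrm{Sgn}(\sigma)\,m(x_{\sigma(1)}\otimes\cdots\otimes x_{\sigma(n)}),$$
namely skew-symmetry and the generalized Jacobi condition. Skew-symmetry is immediate and uses nothing about $m$: the bracket is by construction the total antisymmetrization of $m$, so for $\tau\in \mathcal{S}_n$ the substitution $x_i\mapsto x_{\tau(i)}$ is absorbed by reindexing $\sigma\mapsto\sigma\tau$, and multiplicativity of $\mathrm{Sgn}$ yields $[x_{\tau(1)},\ldots,x_{\tau(n)}]=\mathrm{Sgn}(\tau)[x_1,\ldots,x_n]$.

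For the generalized Jacobi condition I would substitute the definition of the bracket into both the inner and the outer slot of the left-hand side
$$\sum_{\sigma\in \mathcal{S}_{2n-1}}\mathrm{Sgn}(\sigma)\,\big[[x_{\sigma(1)},\ldots,x_{\sigma(n)}],x_{\sigma(n+1)},\ldots,x_{\sigma(2n-1)}\big].$$
Expanding the inner bracket introduces an antisymmetrization of the first $n$ entries, and expanding the outer bracket introduces a second antisymmetrization that, in particular, carries the bracketed element through each of the $n$ slots of the outer $m$. The result is a linear combination of nested monomials $m(\cdots\otimes m(\text{$n$-block})\otimes\cdots)$ in which the inner copy of $m$ occupies each of the positions $1,\ldots,n$, exactly the positions $i=0,\ldots,n-1$ appearing in the partial-associativity axiom.

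The key step is to reorganize this combination. I would observe that the three antisymmetrizations (over $\sigma$, over the inner block, and over the outer slots) amalgamate: absorbing the inner two into a relabelling of $\sigma$ turns the whole expression, up to a fixed nonzero scalar, into the total antisymmetrization over $\mathcal{S}_{2n-1}$ of $\sum_{i=0}^{n-1}\epsilon_i\,m(x_1\otimes\cdots\otimes x_i\otimes m(x_{i+1}\otimes\cdots\otimes x_{i+n})\otimes\cdots\otimes x_{2n-1})$, where $\epsilon_i$ is the sign produced by bringing the inner bracket into slot $i+1$. Since the partial-associativity axiom asserts that this inner sum over $i$ vanishes for every tuple of arguments, in particular for each permuted tuple $(x_{\sigma(1)},\ldots,x_{\sigma(2n-1)})$, the whole antisymmetrized sum is a sum of zeros, and the generalized Jacobi identity follows.

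The main obstacle is precisely the sign bookkeeping hidden in the $\epsilon_i$: one must verify that the coefficient with which each position of the nested $m$ enters the antisymmetrized double bracket coincides with the coefficient of that position in the partial-associativity relation. This is the delicate point, and it is sensitive to the parity of $n$; for instance, for $n=2$ the two positions enter with opposite signs, so that the relevant instance of the axiom is an associativity-type relation between the two parenthesizations. Once the positions and the signs are matched so that the three antisymmetrizations collapse onto the single antisymmetrization of the partial-associativity constraint, the vanishing is automatic and no further computation is needed.
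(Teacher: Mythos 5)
The paper does not actually prove this proposition; it is stated as a consequence of the functor constructed by Gnedbaye, with only a citation, so there is no in-text argument to compare yours against. Your strategy --- total antisymmetrization for skew-symmetry, then reorganizing the triple sum over $\mathcal{S}_{2n-1}\times\mathcal{S}_n\times\mathcal{S}_n$ coming from expanding the inner and outer brackets into a single antisymmetrization over $\mathcal{S}_{2n-1}$ of the $n$ nested monomials --- is the standard and correct route. Your claim that the three antisymmetrizations amalgamate up to a fixed nonzero scalar is also right, though it deserves a sentence: the relabelling map $(\sigma,\tau,\rho)\mapsto(i,\pi)$ has fibers of constant size $n!\,(n-1)!$, and the product $\mathrm{Sgn}(\sigma)\mathrm{Sgn}(\tau)\mathrm{Sgn}(\rho)$ is constant on each fiber because changing $\sigma$ on the inner (resp.\ outer) block by $\alpha$ forces $\rho$ (resp.\ $\tau$) to change by $\alpha^{-1}$.

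The gap is exactly where you locate it: you never compute the signs $\epsilon_i$, and they carry all of the content. For the representative term with the inner $m$ in position $i$ and the arguments in natural order, the producing triple has $\rho=\mathrm{id}$, $\tau$ an $i$-cycle of sign $(-1)^{i-1}$, and $\sigma$ a shift of a block of length $n$ past $i-1$ letters, of sign $(-1)^{n(i-1)}$; hence $\epsilon_i=(-1)^{(n+1)(i-1)}$. For $n$ odd these are all $+1$, the inner sum is precisely the unsigned partial-associativity identity stated in the paper, and your proof closes. For $n$ even the signs alternate, and what your argument actually requires is the signed relation $\sum_{i=0}^{n-1}(-1)^{(n+1)i}\,m(x_1\otimes\cdots\otimes m(x_{i+1}\otimes\cdots\otimes x_{i+n})\otimes\cdots\otimes x_{2n-1})=0$ --- for $n=2$ this is ordinary associativity, as you observe --- which is the operadic (Gnedbaye) definition of partial associativity but is not the identity as literally written in the paper's definition. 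So either the statement must be restricted to odd $n$, or the axiom must be taken in its signed form; as written, the assertion that ``the vanishing is automatic'' is unjustified for even $n$, and the sign bookkeeping you defer is the one step of the proof that cannot be skipped.
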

%
%

\subsection{Representations of $n$-Lie algebras}
In this section we consider  adjoint representations  of $n$-Lie algebras and show that any $n$-Lie algebra can be represented by a Leibniz algebra.
\begin{definition}
A representation of an $n$-Lie algebra $(\mathcal{N},[\cdot ,...,\cdot ])$
on a vector space $\mathcal{N}$ is a skew-symmetric multilinear map $\rho:\mathcal{N}^{ n-1}\longrightarrow End(\mathcal{N})$,
satisfying  for $x,y\in \mathcal{N}^{n-1}$ the identity
\begin{equation}\label{RepIdentity1}
\rho(x)\circ\rho(y)-\rho(y)\circ\rho(x)=\sum_{i=1}^{n-1}\rho(x_1,...,ad_y(x_i),...,x_{n-1}),
\end{equation}

where $ad_y(x_i)=[y_1,\cdots, y_{n-1},x_i]$ is an endomorphism of   $\mathcal{N}$.
\end{definition}
Two representations $\rho$ and $\rho'$ on $\mathcal{N}$ are \emph{equivalent} if there exists $f:\mathcal{N} \rightarrow \mathcal{N} $ an isomorphism of vector space such that $f(x\cdot y)=x\cdot ' f(y)$ where $x\cdot y=\rho(x)(y)$ and $x\cdot' y=\rho'(x)(y)$ for $x\in \mathcal{N}^{n-1}$ and $y\in \mathcal{N}.$
\begin{example}Let $(\mathcal{N}, [\cdot  ,..., \cdot ] )$ be an $n$-Lie algebra.  The map $\rm ad$  defined in \eqref{adjointMap} is a representation. The identity \eqref{RepIdentity1} is equivalent to Nambu identity. It is called adjoint representation.
\end{example}

 Leibniz algebras were introduced by Loday. 
A Leibniz algebra  is a pair $(A, [\cdot, \cdot], \alpha)$
consisting of a vector  space $A$, a bilinear map $[\cdot, \cdot]:
A\times A \rightarrow A$  satisfying, for $x,y,z\in A$, 
\begin{equation} \label{Leibnizalgident}
[x,[y,z]]=[[x,y],z]+[y,[x,z]].
\end{equation}

Let $(\mathcal{N},[\cdot ,...,\cdot ])$ be an $n$-Lie  algebras and  $\wedge^{n-1}\mathcal{N}$ be the set of elements $x_1\wedge...\wedge x_{ n-1}$ that are skew-symmetric in their arguments. We denote by $\mathcal{L}(\mathcal{N})$ the space $\wedge^{n-1}\mathcal{N}$ and  call it  the fundamental set.  Let  $x=x_1\wedge...\wedge x_{ n-1}\in\wedge^{n-1}\mathcal{N},\ y=y_1\wedge...\wedge y_{ n-1}\in\wedge^{n-1}\mathcal{N}, \ z\in \mathcal{N}$.   Let 
   $L:\wedge^{n-1}\mathcal{N}\longrightarrow End(\mathcal{N})$ be a linear map defined  as 
\begin{equation}\label{adj}L(x)\cdot z=[x_1,...,x_{n-1},z],\end{equation}
and extending it linearly to all elements of  $\wedge^{n-1}\mathcal{N}$.  Notice that $L(x)\cdot z=ad_x(z)$. 
We define  a bilinear map $[\ ,\ ]:\wedge^{n-1}\mathcal{N}\times\wedge^{n-1}\mathcal{N}\longrightarrow\wedge^{n-1}\mathcal{N}$  by
\begin{equation}\label{brackLei}[x ,y]=L(x)\bullet y=\sum_{i=0}^{n-1}\big(y_1,...,L(x)\cdot y_i,...,y_{n-1}\big).\end{equation}

\begin{lemma}\label{3.1}
The map $L$ satisfies
\begin{equation} L([x ,y ])\cdot z=L(x)\cdot \big(L(y)\cdot z\big)-L(y)\cdot \big(L(x)\cdot z\big),\end{equation}
for all $x,\ y\in \mathcal{L}(\mathcal{N}),\ z\in \mathcal{N}$.
\end{lemma}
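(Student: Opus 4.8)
The plan is to read the statement as the assertion that $L$ is a morphism from the Leibniz bracket \eqref{brackLei} on $\mathcal{L}(\mathcal{N})$ into the Lie algebra $(\mathrm{End}(\mathcal{N}),[\cdot,\cdot])$ equipped with the commutator bracket, and the entire argument will be powered by the Nambu identity \eqref{NambuIdentity}. I would work backwards from the right-hand side $L(x)\cdot\big(L(y)\cdot z\big)-L(y)\cdot\big(L(x)\cdot z\big)$ and unfold the definition \eqref{adj}. Writing $x=x_1\wedge\cdots\wedge x_{n-1}$ and $y=y_1\wedge\cdots\wedge y_{n-1}$, the first summand becomes $[x_1,\ldots,x_{n-1},[y_1,\ldots,y_{n-1},z]]$, which is precisely the left-hand side of \eqref{NambuIdentity} under the substitution $x_n=y_1,\ldots,x_{2n-2}=y_{n-1},x_{2n-1}=z$.

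Applying the Nambu identity to this term expands it into a sum over $i=n,\ldots,2n-1$. Under the substitution above, the terms with $i=n,\ldots,2n-2$ successively replace each $y_k$ by $[x_1,\ldots,x_{n-1},y_k]=L(x)\cdot y_k$ inside the bracket with last slot $z$, while the single term $i=2n-1$ replaces $z$ by $[x_1,\ldots,x_{n-1},z]=L(x)\cdot z$, producing $[y_1,\ldots,y_{n-1},L(x)\cdot z]=L(y)\cdot\big(L(x)\cdot z\big)$. This last term is exactly the quantity being subtracted, so the two cancel, leaving the sum $\sum_{k=1}^{n-1}[y_1,\ldots,L(x)\cdot y_k,\ldots,y_{n-1},z]$.

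It then remains to recognize this surviving sum as $L\big([x,y]\big)\cdot z$. Comparing with \eqref{brackLei}, the element $[x,y]$ is $\sum_{k=1}^{n-1} y_1\wedge\cdots\wedge(L(x)\cdot y_k)\wedge\cdots\wedge y_{n-1}$, and since $L$ is linear and acts on a wedge by $L(w_1\wedge\cdots\wedge w_{n-1})\cdot z=[w_1,\ldots,w_{n-1},z]$, applying $L(\,\cdot\,)\cdot z$ to $[x,y]$ reproduces exactly that sum. This closes the chain of equalities and yields the claimed identity.

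The computation is routine once the substitution is in place; the main bookkeeping obstacle is the correct matching of indices in the Nambu identity, in particular isolating the fact that exactly one of the $2n-1$ expansion terms — the one acting on the slot occupied by $z$ — reassembles into $L(y)\cdot\big(L(x)\cdot z\big)$ and cancels, while the remaining $n-1$ terms act on the $y$-slots and assemble into $L\big([x,y]\big)\cdot z$. Provided the skew-symmetry of the wedge is respected so that this sum genuinely represents the element \eqref{brackLei}, no further subtlety arises.
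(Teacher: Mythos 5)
Your proof is correct and is exactly the ``straightforward verification'' the paper omits: the paper states the lemma without proof (deferring to the reference), and the intended argument is precisely your substitution $x_n=y_1,\dots,x_{2n-2}=y_{n-1}$, $x_{2n-1}=z$ into the Nambu identity, with the $i=2n-1$ term giving $L(y)\cdot\big(L(x)\cdot z\big)$ and the remaining $n-1$ terms assembling into $L\big([x,y]\big)\cdot z$ via the definition \eqref{brackLei}.
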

\begin{proposition}\label{HomLeibOfHomNambu}The pair $\big(\mathcal{L}(\mathcal{N}),\ [\ ,\ ]\big)$ is a  Leibniz algebra.
\end{proposition}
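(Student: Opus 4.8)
The assertion is the (left) Leibniz identity \eqref{Leibnizalgident}, that is
\[
[x,[y,z]]=[[x,y],z]+[y,[x,z]],\qquad x,y,z\in\mathcal{L}(\mathcal{N}).
\]
The plan is to turn this into a commutator identity for operators and then invoke Lemma \ref{3.1} as the only nontrivial input. For $x\in\mathcal{L}(\mathcal{N})$ set $D_x:=[x,\cdot\,]=L(x)\bullet(-)$, the endomorphism of $\mathcal{L}(\mathcal{N})$ that, by \eqref{brackLei}, replaces one factor at a time each argument $z_i$ of a wedge by $L(x)\cdot z_i$ and sums. Since $[x,[y,z]]=D_xD_y(z)$, $[y,[x,z]]=D_yD_x(z)$ and $[[x,y],z]=D_{[x,y]}(z)$, the Leibniz identity is \emph{equivalent} to
\[
D_xD_y-D_yD_x=D_{[x,y]}\quad\text{on }\mathcal{L}(\mathcal{N}),
\]
so it suffices to prove this operator equality.

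By linearity I would check it on a decomposable $z=z_1\wedge\cdots\wedge z_{n-1}$. Applying $D_y$ and then $D_x$, the wedge $D_xD_y(z)$ splits into two kinds of terms: \emph{diagonal} terms, where $L(x)$ and $L(y)$ hit the same slot $j$ and build the composite $L(x)\cdot\big(L(y)\cdot z_j\big)$, and \emph{mixed} terms, where $L(x)$ hits a slot $k$ and $L(y)$ a distinct slot $j$. Doing the same for $D_yD_x(z)$ and subtracting, the mixed part of the commutator is a sum over ordered pairs of distinct slots; relabelling $j\leftrightarrow k$ shows the mixed contribution of $D_xD_y(z)$ cancels termwise against that of $D_yD_x(z)$.

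Only the diagonal terms survive, giving
\[
\big(D_xD_y-D_yD_x\big)(z)=\sum_{j=1}^{n-1} z_1\wedge\cdots\wedge\Big(L(x)\cdot\big(L(y)\cdot z_j\big)-L(y)\cdot\big(L(x)\cdot z_j\big)\Big)\wedge\cdots\wedge z_{n-1}.
\]
This is exactly where the $n$-Lie (Nambu) structure is used: Lemma \ref{3.1} identifies the bracketed inner expression with $L([x,y])\cdot z_j$, whence the right-hand side is $\sum_j z_1\wedge\cdots\wedge L([x,y])\cdot z_j\wedge\cdots\wedge z_{n-1}=D_{[x,y]}(z)$, completing the argument. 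The only genuine bookkeeping obstacle is the mixed-term cancellation, together with the observation that $[x,y]$ and $[x,z]$ are in general non-decomposable sums of wedges; this causes no trouble because $L$ and the action $\bullet$ are extended linearly, so $D_{[x,y]}$, $L([x,y])$ and $D_y$ applied to $[x,z]$ all distribute over such sums.
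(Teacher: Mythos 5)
Your argument is correct, and it is precisely the ``straightforward verification'' that the paper delegates to the reference \cite{De Azcarraga3}: reformulating the Leibniz identity as the operator identity $D_xD_y-D_yD_x=D_{[x,y]}$, cancelling the mixed terms pairwise, and applying Lemma \ref{3.1} to the surviving diagonal terms is the standard proof, with Lemma \ref{3.1} correctly identified as the only place the Nambu identity enters. Nothing essential is missing.
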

\begin{proof}Straightforward verification, see \cite{De Azcarraga3} .
\end{proof}
We obtain a similar result if we consider the space $T\N=\otimes^{n-1} \N$ instead of $\mathcal{L}(\N)$.

\subsection{Central Extensions}

We racall  some basics about extensions of $n$-Lie algebras.
\begin{definition}
Let $A,B,C$ be three $n$-Lie algebras ($n\geq 2$). An extension of $B$ by $A$ is a short sequence: 
\[ A \overset{\lambda}{\to} C \overset{\mu}{\to} B, \]
such that $\lambda$ is an injective homomorphism, $\mu$ is a surjective homomorphism, and\\ $\operatorname{Im} \lambda \subset \ker \mu$. We say also that $C$ is an extension of $B$ by $A$.
\end{definition}
\begin{definition}
Let $A$, $B$ be two $n$-Lie algebras, and  $A \overset{\lambda}{\to} C \overset{\mu}{\to} B$ be an extension of $B$ by $A$.
\begin{itemize}
\item The extension is said to be trivial if there exists an ideal $I$ of $C$ such that\\ $C = \ker \mu \oplus I$.
\item It is said to be central if $\ker \mu \subset Z (C)$.
\end{itemize}
\end{definition}
We may equivalently define central extensions by a $1$-dimensional algebra (we will simply call it central extension) this way:

\begin{definition}
Let $\N$ be an $n$-Lie algebra. We call central extension of $\N$ the space $\bar{\N}=\N\oplus \mathbb{K} c$ equipped with the bracket:
\[ \AKMSbracket{x_1,...,x_n}_c = \AKMSbracket{x_1,...,x_n} + \omega\AKMSpara{x_1,...,x_n} c \text{ and } \AKMSbracket{x_1,...,x_{n-1},c}_c = 0,\forall x_1,...,x_n \in \N, \] 
where $\omega$ is a skew-symmetric $n$-linear form such that $\AKMSbracket{\cdot,...,\cdot}_c$ satisfies the Nambu identity (or Jacobi identity for $n=2$). 
\end{definition}

\begin{proposition}[\cite{De Azcarraga3}]
\begin{enumerate}
\item The bracket of a central extension satisfies the Nambu identity  if and only if $\omega$ is a $2$-cocycle for the scalar cohomology of $n$-Lie algebras.
\item Two central extensions of an $n$-Lie algebra $A$ given by two maps $\omega_1$ and $\omega_2$ are isomorphic if and only if $\omega_2 - \omega_1$ is a $2$-coboundary for the scalar cohomology of $n$-Lie algebras.
\end{enumerate}
\end{proposition}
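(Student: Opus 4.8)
The plan is to test the Nambu identity \eqref{NambuIdentity} for the extended bracket $[\cdot,\dots,\cdot]_c$ directly on arguments $x_1,\dots,x_{2n-1}\in\N$ and to read off its $\N$-component and its $\K c$-component separately. The crucial simplifying feature is that, by definition, every bracket in which $c$ occurs as an entry vanishes, and $c$ never arises inside an inner bracket whose arguments lie in $\N$. Consequently all ``cross terms'' created by the $\omega$-corrections collapse, and the identity for $[\cdot,\dots,\cdot]_c$ splits into the identity already satisfied by $(\N,[\cdot,\dots,\cdot])$ together with a single scalar identity on $\omega$.

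For the first assertion I would expand the left-hand side of \eqref{NambuIdentity} for $[\cdot,\dots,\cdot]_c$, obtaining
\[
[x_1,\dots,x_{n-1},[x_n,\dots,x_{2n-1}]_c]_c
=[x_1,\dots,x_{n-1},[x_n,\dots,x_{2n-1}]]
+\omega\AKMSpara{x_1,\dots,x_{n-1},[x_n,\dots,x_{2n-1}]}\,c,
\]
the contribution proportional to $\omega(x_n,\dots,x_{2n-1})$ vanishing because $[x_1,\dots,x_{n-1},c]_c=0$. Expanding each term of the right-hand sum in the same way kills the inner $\omega$-corrections (again by centrality of $c$) and leaves $\sum_{i=n}^{2n-1}\omega\AKMSpara{x_n,\dots,x_{i-1},[x_1,\dots,x_{n-1},x_i],x_{i+1},\dots,x_{2n-1}}\,c$ as the $c$-component. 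Since the $\N$-components of the two sides agree by the Nambu identity for $\N$, the extended bracket satisfies Nambu if and only if the $c$-components agree, i.e.
\[
\omega\AKMSpara{x_1,\dots,x_{n-1},[x_n,\dots,x_{2n-1}]}
=\sum_{i=n}^{2n-1}\omega\AKMSpara{x_n,\dots,x_{i-1},[x_1,\dots,x_{n-1},x_i],x_{i+1},\dots,x_{2n-1}}.
\]
This is precisely the vanishing of the scalar coboundary $\delta\omega$, so $\omega$ is a $2$-cocycle.

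For the second assertion I would first record what ``isomorphic'' means: an isomorphism of central extensions is an $n$-Lie morphism $\Phi\colon\bar{\N}\to\bar{\N}$ fixing $c$ and inducing the identity on $\bar{\N}/\K c\cong\N$, hence necessarily of the form $\Phi(x)=x+f(x)c$ for $x\in\N$ and $\Phi(c)=c$, with $f\colon\N\to\K$ linear. Writing the morphism condition $\Phi([x_1,\dots,x_n]_{c,1})=[\Phi(x_1),\dots,\Phi(x_n)]_{c,2}$ and again discarding every mixed bracket containing $c$, the $\N$-components match identically and the $c$-components yield $f([x_1,\dots,x_n])+\omega_1(x_1,\dots,x_n)=\omega_2(x_1,\dots,x_n)$, that is $\omega_2-\omega_1=\delta f$. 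Thus the two extensions are isomorphic exactly when $\omega_2-\omega_1$ is a $2$-coboundary; the converse is the same computation read backwards, with invertibility of $\Phi$ guaranteed by the inverse built from $-f$.

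The bookkeeping of indices in the sum is routine. The one genuinely delicate point is the normalisation in the second part: one must justify that ``isomorphic'' may be taken in the sense of equivalence of extensions, so that $\Phi$ can be arranged to fix $c$ and to reduce to the identity modulo the centre. Once this normalisation is secured the computation is forced, and the identification of the resulting scalar identities with the cocycle and coboundary conditions of the scalar complex of Section~2 is immediate.
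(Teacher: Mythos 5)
The paper itself offers no proof of this proposition --- it is quoted directly from the reference \cite{De Azcarraga3} --- so there is no argument of the paper's to compare yours against. Your verification is the standard one and is correct: splitting each side of the Nambu identity for $[\cdot,\dots,\cdot]_c$ into its $\N$-component and its $\K c$-component, observing that centrality kills every term in which $c$ appears as an argument, and matching the residual scalar identity with the cocycle condition \eqref{cocyle2} specialised to trivial coefficients (where the terms involving the adjoint action drop out) is exactly what is needed; likewise the normal form $\Phi(x)=x+f(x)c$, $\Phi(c)=c$ for an equivalence of extensions gives $\omega_2-\omega_1=\pm\delta^1 f$, a coboundary either way. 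You were right to flag the one genuine subtlety, namely that ``isomorphic'' must be read as equivalence of extensions (fixing $c$ and inducing the identity on the quotient); with that convention fixed the rest is forced.
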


\section{Deformation cohomology of $n$-Lie algebras}

The basic concepts of homological algebra are those of a complex and
homomorphisms of complexes, defining the category of complexes, see for example \cite{weibel}. A \emph{chain complex} $\mathcal{C} _{.}$ is a sequence $\mathcal{C} =\{\mathcal{C}
_{p}\}_{p}$ of abelian groups or more generally objects of an abelian category and an indexed set $%
\delta=\{\delta_{p}\}_{p}$ of homomorphisms $\delta_{p}:\mathcal{C}
_{p}\rightarrow \mathcal{C} _{p-1}$ such that $\delta_{p-1}\circ \delta_{p}=0$ for all $p$. A chain complex can be considered as a cochain complex by reversing
the enumeration $\mathcal{C} ^{p}=\mathcal{C}
_{-p}$ and $\delta^{p}=\delta_{-p}$. A \emph{cochain complex} $\mathcal{C} $ is a
sequence of abelian groups and homomorphisms
$
\cdots \overset{\delta^{p-1}}{\longrightarrow }\mathcal{C} ^{p}\overset{\delta^{p}}{%
\longrightarrow }\mathcal{C} ^{p+1}\overset{\delta^{p+1}}{\longrightarrow }\cdots
$
with the property $\delta^{p+1}\circ \delta^{p}=0$ for all $p$.
The homomorphisms $\delta^p$ are called \emph{coboundary operators} or \emph{codifferentials}.
 A \emph{cohomology} of a cochain complex $\mathcal{C} $ is given by  the groups $%
H^{p}(\mathcal{C} )=Ker\delta^{p}/Im\delta^{p-1}$.

The elements of $\mathcal{C} ^{p}$ are $p$-cochains, the
elements of $Z^{p}:=Ker \delta^{p}$ are $p$-cocycles, the elements of $%
B^{p}:=Im\delta^{p-1}$ are $p$-coboundaries. Because
$\delta^{p+1}\circ \delta^{p}=0$ for all $p$, we have $0\subseteq
B^{p}\subseteq Z^{p} \subseteq \mathcal{C} ^{p}$ for all $p$. The
$p^{th}$ cohomology group is the quotient $H^p=Z^{p}/B^{p}$.

The cohomology of $n$-Lie algebras is induced by the cohomology of Leibniz algebras.
Let $(\N,[\cdot ,...,\cdot ])$ be an $n$-Lie algebra and the pair $(\mathcal{L}(\N)=\N^{\otimes n-1},[\cdot,\cdot])$  be the Leibniz algebra associated to $\N$ where the bracket is defined in \eqref{brackLei}.
\begin{theorem} Let $(\N,[\cdot ,...,\cdot ])$ be an $n$-Lie algebra. \\ Let $\mathcal{C}^p(\N,\N)=Hom(\otimes^{p}\mathcal{L}(\N)\otimes \N,\N)$ for $p\geq 1$ be the  cochains set and 
$\Delta :\mathcal{C}^p(\N,\N)\rightarrow C^p(\mathcal{L}(\N),\mathcal{L}(\N))$ be the linear map defined for $p=0$ by
\begin{eqnarray*}
\Delta\varphi(x_1\otimes\cdots\otimes x_{n-1})&=& \sum_{i=1}^{n-1}x_1\otimes\cdots\otimes\varphi(x_i)\otimes\cdots\otimes x_{n-1}
\end{eqnarray*} and for $p>0$ by
\begin{align}
&\label{defdelta}(\Delta\varphi )(a_1,\cdots ,a_{p+1})=\sum_{i=1}^{n-1}x_{p+1}^1\otimes\cdots\otimes\varphi(a_1,\cdots ,a_{n-1}\otimes x_{p+1}^i)\otimes\cdots\otimes x_{p+1}^{n-1},\nonumber
\end{align}
where $a_j=x_{j}^1\otimes\cdots\otimes x_j^{n-1}.$
Then there exists a cohomology complex $(\mathcal{C}^\bullet(\N,\N),\delta )$ for $n$-Lie algebras such that $d\circ \Delta =\Delta\circ \delta.$\\

The coboundary map $\delta: \mathcal{C}^p(\N,\N)\rightarrow \mathcal{C}^{p+1}(\N,\N)$ is defined for $\varphi\in \mathcal{C}^p(\N,\N)$ by
\begin{align*}
& \delta^{p+1}\psi(a_1,...,a_p,a_{p+1},z)=
 \sum_{1\leq i< j}^{p+1}(-1)^i\psi\big(a_1,...,\widehat{a_i},...,
a_{j-1},[a_i,a_j],...,a_{p+1},z\big)\nonumber\\
&
+
 \sum_{i=1}^{p+1}(-1)^i\psi\big(a_1,...,\widehat{a_i},...,
a_{p+1},L(a_i)\cdot z\big) + \sum_{i=1}^{p+1}(-1)^{i+1}L(a_i)\cdot \psi\big(a_1,...,\widehat{a_i},...,
a_{p+1},z\big)
\nonumber\\ &
+ (-1)^p\big(\psi(a_1,...,a_p,\ )\cdot a_{p+1}\big)\bullet z,\nonumber
\end{align*}
where
  $$\big(\psi(a_1,...,a_p,\ )\cdot a_{p+1}\big)\bullet z= \sum_{i=1}^{n-1}[x_{p+1}^1,...,\psi(a_1,...,a_p,x_{p+1}^i ),...,x_{p+1}^{n-1},z],$$
  for   element $a_i\in \mathcal{L}(\N)$, $z\in  \N$.
\end{theorem}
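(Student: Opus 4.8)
The plan is to realize the sought $n$-Lie complex as a subcomplex of the Leibniz complex of the associated algebra $\mathcal{L}(\N)$ from Proposition~\ref{HomLeibOfHomNambu}, and to transport the Leibniz coboundary through $\Delta$. Write $d$ for the Leibniz coboundary on $C^\bullet(\mathcal{L}(\N),\mathcal{L}(\N))$ with adjoint coefficients, so that for a Leibniz $q$-cochain $\psi$,
\begin{align*}
(d\psi)(a_1,\dots,a_{q+1}) &= \sum_{i=1}^{q}(-1)^{i+1}[a_i,\psi(a_1,\dots,\widehat{a_i},\dots,a_{q+1})] + (-1)^{q+1}[\psi(a_1,\dots,a_q),a_{q+1}] \\
&\quad + \sum_{1\le i<j\le q+1}(-1)^i\psi(a_1,\dots,\widehat{a_i},\dots,[a_i,a_j],\dots,a_{q+1}),
\end{align*}
with $d\circ d=0$ the standard fact for Leibniz cohomology. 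Since the candidate $\delta$ is already prescribed by an explicit formula, the statement reduces to two claims: (I) the intertwining $d\circ\Delta=\Delta\circ\delta$, and (II) $\delta\circ\delta=0$, which is what makes $(\mathcal{C}^\bullet(\N,\N),\delta)$ a genuine complex.

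First I would dispose of (II) granting (I). Reading \eqref{defdelta} correctly, $\Delta$ sends $\varphi$ to the Leibniz cochain obtained by letting $\varphi(a_1,\dots,a_p,\,\cdot\,)\in\mathrm{End}(\N)$ act as a derivation on the final wedge argument $a_{p+1}=x^1\wedge\cdots\wedge x^{n-1}$, exactly as the operation $\bullet$ in \eqref{brackLei}. Because the derivation-extension map $\mathrm{End}(\N)\to\mathrm{End}(\mathcal{L}(\N))$, $\phi\mapsto\phi\bullet(\,\cdot\,)$, is injective (faithfulness of the exterior-power action on $\wedge^{n-1}\N$, valid once $\dim\N\ge n$), the map $\Delta$ is injective. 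Granting (I), this forces (II): indeed $\Delta(\delta\circ\delta\,\varphi)=d(\Delta(\delta\varphi))=d(d(\Delta\varphi))=0$, and injectivity of $\Delta$ yields $\delta\circ\delta\,\varphi=0$. The same argument runs verbatim with $T\N=\otimes^{n-1}\N$ in place of $\mathcal{L}(\N)$.

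The heart of the proof is (I), carried out by expanding $d(\Delta\varphi)(a_1,\dots,a_{p+2})$ and matching it term by term against $\Delta(\delta\varphi)$, whose last argument $z$ is instantiated by the factors of $a_{p+2}$ and derivation-summed. The structure-constant sum $\sum(-1)^i\psi(\dots,[a_i,a_j],\dots)$ passes through $\Delta$ untouched and reproduces the first sum in the formula for $\delta$, since $\Delta$ only modifies the final tensor slot. The adjoint terms $[a_i,\Delta\varphi(\dots)]=L(a_i)\bullet\Delta\varphi(\dots)$ and $[\Delta\varphi(\dots),a_{p+2}]$ are the delicate ones: unfolding the outer derivation $L(a_i)\bullet$ against the inner derivation defining $\Delta\varphi$, and invoking Lemma~\ref{3.1} to rewrite $L(a_i)\,L(a_j)-L(a_j)\,L(a_i)=L([a_i,a_j])$, the cross terms where both derivations strike the same factor cancel in pairs, while the survivors regroup precisely into the $\Delta$-images of the families $\sum(-1)^iL(a_i)\cdot z$, $\sum(-1)^{i+1}L(a_i)\cdot\psi(\dots)$, and $(-1)^p(\psi(\dots)\cdot a_{p+1})\bullet z$ appearing in $\delta\varphi$.

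The main obstacle is thus the bookkeeping in this last expansion: keeping the signs consistent with the Leibniz convention, tracking which tensor factor each copy of $L$ acts on, and verifying that the double-derivation cross terms cancel through Lemma~\ref{3.1} so that exactly the four families of terms of $\delta$ remain. Once that identity is checked, (I) holds; (II) then follows from injectivity of $\Delta$ together with $d\circ d=0$; and $(\mathcal{C}^\bullet(\N,\N),\delta)$ is the desired complex satisfying $d\circ\Delta=\Delta\circ\delta$.
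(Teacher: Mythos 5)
The paper itself offers no written proof of this theorem --- it is stated as a survey result, with the only hint being the sentence ``The cohomology of $n$-Lie algebras is induced by the cohomology of Leibniz algebras'' --- so your proposal can only be judged against the standard argument it gestures at. Your architecture is exactly that argument: transport the Leibniz coboundary $d$ on $C^\bullet(\mathcal{L}(\N),\mathcal{L}(\N))$ through $\Delta$, prove the intertwining $d\circ\Delta=\Delta\circ\delta$, and deduce $\delta\circ\delta=0$ from $d\circ d=0$ and injectivity of $\Delta$. That logical reduction is sound, and your caveat that faithfulness of the derivation action on $\wedge^{n-1}\N$ requires $\dim\N\ge n$ (avoidable by working with $\otimes^{n-1}\N$, which the paper itself equivocates between) is a real point the paper glosses over.

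The gap is that claim (I), which you correctly identify as the heart of the matter, is never established: since the formula for $\delta$ is prescribed, the entire content of the theorem is that these particular signs and the extra term $(-1)^p\big(\psi(a_1,\ldots,a_p,\;\cdot\;)\cdot a_{p+1}\big)\bullet z$ make $d\circ\Delta=\Delta\circ\delta$ hold, and announcing that ``the cross terms cancel and the survivors regroup'' is a plan rather than a verification. Worse, one structural assertion in your sketch is inaccurate: the terms $\Delta\varphi(\ldots,[a_i,a_j],\ldots)$ do \emph{not} all pass through $\Delta$ untouched. They do for $j\le p+1$, but for $j=p+2$ the bracket $[a_i,a_{p+2}]=L(a_i)\bullet a_{p+2}$ occupies the last slot, where $\Delta\varphi$ is itself a derivation extension; expanding the two nested derivations yields a diagonal part, which is precisely the origin of the family $\sum_i(-1)^i\psi(\ldots,L(a_i)\cdot z)$ in $\delta$, together with off-diagonal cross terms that must cancel against pieces of $[a_i,\Delta\varphi(\ldots)]$. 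Misattributing those terms to the ``untouched'' family means the term-matching as you describe it would not close up. You should at minimum carry out the expansion for $p=0$ and $p=1$, where it reduces to the derivation condition $\delta^1$ and the $2$-cocycle identity \eqref{cocyle2}, before asserting the general case; note also that Lemma~\ref{3.1} is really what guarantees $d\circ d=0$ on the Leibniz side (via Proposition~\ref{HomLeibOfHomNambu}) rather than an ingredient of the cross-term cancellation itself.
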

In particular, for $p=1$, we get  the set of 2-cocycles 
$$Z^2(\N,\N)=\{ \psi:\N^n\rightarrow   \N \text{ satisfying } \delta^2\psi=0\},$$ where  for $a_1=x_1^1\otimes \cdots \otimes x_1^{n-1}$ and $a_2=x_2^1\otimes \cdots \otimes x_2^{n-1}$
\begin{eqnarray}
&& \delta^2(a_1,a_2,z)=\\
&&-\sum_{i=1}^{n-1}\psi(x_2^1,\cdots ,[x_1^1,\cdots x_1^{n-1},x_2^i],\cdots x_2^{n-1},z)-\psi(x_2^1,\cdots ,x_2^{n-1},[x_1^1,\cdots ,x_1^{n-1},z])\nonumber \\
&& +\psi(x_1^1,\cdots ,x_1^{n-1},[x_2^1,\cdots ,x_2^{n-1},z])+[x_1^1,\cdots ,x_1^{n-1},\psi(x_2^1,\cdots ,x_2^{n-1},z)] \label{cocyle2} \\
&& -[x_2^1,\cdots ,x_2^{n-1},\psi(x_1^1,\cdots ,x_1^{n-1},z)]-\sum_{i=1}^{n-1}[x_2^1,\cdots ,\psi(x_1^1,\cdots x_1^{n-1},x_2^i),\cdots x_2^{n-1},z].\nonumber
\end{eqnarray}
For $p=0$, $\psi:\N\rightarrow \N$ and $(x_1,\cdots,x_n)\in\N^n$, we have
\begin{equation}
\delta^1\psi(x_1,\cdots,x_n)=-\psi([x_1,\cdots,x_n])+\sum_{i=1}^{n}[x_1,\cdots,\psi(x_i)\cdots x_n]
\end{equation}
Notice that a linear map $\psi:\N\rightarrow \N$ such that $\delta^1\psi=0$ is a 1-cocycle and it  corresponds to a derivation of the $n$-Lie algebra. The set of 2-coboundaries is defined as 
$$B^2(\N,\N)=\{\psi :\N^n\rightarrow   \N : \exists \varphi:\N\rightarrow \N \text{ such that } 
\psi=\delta^1 \varphi \}.$$
Hence, the second cohomology group, which plays an important role in deformation theory,  is defined as $$H^2(\N,\N)=Z^2(\N,\N)/B^2(\N,\N).$$

\section{Formal deformation of $n$-Lie algebras }
In this section we study one parameter formal deformations of $n$-Lie algebras. This approach  were introduced by Gerstenhaber for associative \cite{Gerstenhaber64} and by Nijenhuis and Richardson for Lie \cite{NR}. Since then the approach were extended to many other algebraic structures. The main results connect formal deformation to cohomology groups. The noncommutative case was studied by Pincson.

\subsection{One-parameter formal deformation of $n$-Lie algebras }
  Let $\mathbb{K}[[t]]$ be the power series ring in one variable $t$ and coefficients in $\mathbb{K}$ and $\mathcal{N}[[t]]$ be the set of formal series whose coefficients are elements of the vector space $\mathcal{N}$, ($\mathcal{N}[[t]]$ is obtained by extending the coefficients domain of $\mathcal{N}$ from $\mathbb{K}$ to $\mathbb{K}[[t]]$). Given a $\mathbb{K}$-$n$-linear map $\varphi:\mathcal{N}\times...\times \mathcal{N}\rightarrow \mathcal{N}$, it admits naturally an extension to a  $\mathbb{K}[[t]]$-$n$-linear map $\varphi:\mathcal{N}[[t]]\times...\times \mathcal{N}[[t]]\rightarrow \mathcal{N}[[t]]$, that is, if $x_i=\dl\sum_{j\geq0}a_i^jt^j$, $1\leq i\leq n$ then $\varphi(x_1,...,x_n)=\dl\sum_{j_1,...,j_n\geq0}t^{j_1+...+j_n}\varphi(a_1^{j_1},...,a_n^{j_n})$. 

  \begin{definition}Let $(\mathcal{N},[\cdot ,...,\cdot ])$ be an $n$-Lie algebra. A one-parameter formal deformation of the $n$-Lie algebra $\mathcal{N}$ is given by a $\mathbb{K}[[t]]$-$n$-linear map $$[\cdot ,...,\cdot ]_t:\mathcal{N}[[t]]\times...\times \mathcal{N}[[t]]\rightarrow \mathcal{N}[[t]]$$ of the form $[\cdot ,...,\cdot ]_t=\dl\sum_{i\geq0}t^i[\cdot ,...,\cdot ]_i$ where each $[\cdot ,...,\cdot ]_i$ is a skew-symmetric  $\mathbb{K}[[t]]$-$n$-linear map $[\cdot ,...,\cdot ]_i:\mathcal{N}\times...\times \mathcal{N}\rightarrow \mathcal{N}$ (extended  to a  $\mathbb{K}[[t]]$-$n$-linear map), and $[\cdot ,...,\cdot ]_0=[\cdot ,...,\cdot ]$ such that for $(x_i)_{1\leq i\leq 2n-1}$

  \begin{eqnarray}\label{NambuIdentityDefor}
 [ x_{1},..., x_{ n-1},
[x_{n},...,x_{2n-1}]_t]_t=
 \sum_{i=n}^{2n-1}{[x_{n},...,x_{i-1}, [ x_{1},\cdots,x_{n-1},x_{i}]_t,
x_{i+1}, ...,x_{2n-1}]_t}.
\end{eqnarray}%

  The deformation is said to be of order $k$ if $[\cdot ,...,\cdot ]_t=\dl\sum_{i=0}^kt^i[\cdot ,...,\cdot ]_i$ and infinitesimal if $t^2=0$.\\
 The condition \eqref{NambuIdentityDefor}  may be written  for  $x=(x_i)_{1\leq i\leq n-1},\ y=(x_i)_{n\leq i\leq 2n-2}\in \mathcal{L}(\mathcal{N})$ and by setting $z=x_{2n-1}$ 
  \begin{equation}\label{8.1} L_t([x ,y ])\cdot(z)=L_t(x)\cdot \big(L_t(y)\cdot z\big)-L_t(y)\cdot \big(L_t(x)\cdot z\big),\end{equation}
  where $L_t(x)\cdot z=[x_1,...,x_{n-1},z]_t$.\\
 \end{definition}
   Assume that  the deformation is infinitesimal and set $\psi=[\cdot, \cdots,\cdot ]_1$. Then Eq. \eqref{8.1} is equivalent to 
  \begin{eqnarray*}
      & &\big[x_1,....,x_{n-1},\psi(y_1,....,y_n)\big] + \psi\big(x_1,....,x_{n-1},[y_1,....,y_n]\big) 
     \\ &&  =  \sum_{i=1}^n \big[y_1,....,y_{i-1},\psi(x_1,....,x_{n-1},y_i)
  ,y_{i+1},...,y_n\big]  \\
      &&+\sum_{i=1}^n\psi\big(y_1,....,y_{i-1},[x_1,....,x_{n-1},y_i]
  ,y_{i+1},...,y_n\big).
  \end{eqnarray*}
This identity may be viewed as the $2$-cocycle condition $\delta^2[\cdot, ....,\cdot]_1=0$ defined in \eqref{cocyle2}.

More generally, let  
$(\mathcal{N},\mu)$ and $(\mathcal{N},\nu)$ be two $n$-ary operations, 
 $\mu,\nu :\mathcal{N}^{ n}\rightarrow \mathcal{N}$. We define a  $(2n-1)$-ary operation $\mu\circ\nu$ by 
\begin{eqnarray}\label{GerstCircle}
 \mu\circ\nu (x_{1},..., x_{ n-1},
x_{n},...,x_{2n-1})=\mu(x_{1},..., x_{ n-1},
\nu(x_{n},...,x_{2n-1}))\\ -
 \sum_{i=n}^{2n-1}{\mu(x_{n},...,x_{i-1}, \nu( x_{1},\cdots,x_{n-1},x_{i}),
x_{i+1}, ...,x_{2n-1}).}\nonumber
\end{eqnarray}
Then, an $n$-ary operation $\mu$ on a vector space $\mathcal{N}$ satisfies  Nambu identity if and only if $\mu\circ\mu=0$.

Therefore, the Nambu identity \eqref{NambuIdentityDefor} is equivalent to an infinite system, called deformation equation,  
\begin{equation}
\left\{ \sum_{i=0}^{k}[\cdot ,...,\cdot ] _{i}\circ [\cdot ,...,\cdot ]
_{k-i} =0 \qquad k=0,1,2,\cdots \right. 
\end{equation}

For an arbitrary  $k>1,$ the $k^{th}$ equation of the previous system may be written 
\[
\delta^2[\cdot ,...,\cdot ] _{k} =\sum_{i=1}^{k-1}[\cdot ,...,\cdot ] _{i}\circ [\cdot ,...,\cdot ]_{k-i}.
\]
Assume that a deformation of order $m$  satisfies the deformation equation. The
truncated deformation is extended to a deformation of order $m+1$  if 
\[
\delta^2 [\cdot ,...,\cdot ] _{m}=\sum_{i=1}^{m-1}[\cdot ,...,\cdot ] _{i}\circ [\cdot ,...,\cdot ]_{m-i}
.\]
The right-hand side of this equation is called the {\it obstruction } to
find $[\cdot ,...,\cdot ] _{m}$ extending the deformation.

It turns out that the obstruction is a  3-cocycle. Then, if $H^{3}\left(
\N,\N\right) =0$, it follows that all obstructions vanish and every $[\cdot ,...,\cdot ] _{m}\in
Z^{2}\left( \N,\N\right) $ is integrable.\\

In the following, we characterize equivalent and trivial deformations.

\begin{definition} Let $(\mathcal{N},[\cdot ,...,\cdot ])$ be an $n$-Lie algebra. Given two deformations $\mathcal{N}_{t}=(\mathcal{N}[[t]],[\cdot ,...,\cdot ]_{t})$ and $\mathcal{N}^{'}_{t}=(\mathcal{N}[[t]],[\cdot ,...,\cdot ]^{'}_{t})$ of $\mathcal{N}$ where $[\cdot ,...,\cdot ]_{t}=\sum\limits_{i\geq0}^{k}t^{i}[\cdot ,...,\cdot ]_{i}$ and $[\cdot ,...,\cdot ]^{'}_{t}=\sum\limits_{i\geq0}^{k}t^{i}[\cdot ,...,\cdot ]^{'}_{i}$ with $[\cdot ,...,\cdot ]_{0}=[\cdot ,...,\cdot ]^{'}_{0}=[\cdot ,...,\cdot ]$. We say that $\mathcal{N}_{t}$ and $\mathcal{N}^{'}_{t}$ are equivalent if there exists a formal automorphism
$\phi_{t}:\mathcal{N}[[t]] \longrightarrow  \mathcal{N}[[t]]$ that may be written in the form $\phi_{t}=\sum\limits_{i\geq0}\phi_{i}t^{i}$, where $\phi_{i}\in End(\mathcal{N})$ and $\phi_{0}=Id$ such that
\begin{eqnarray}\label{dAN}
    \phi_{t}([x_1,\cdots ,x_n]_{t})&=&[\phi_{t}(x_1),\cdots , \phi_{t}(x_n)]^{'}_{t}.
\end{eqnarray}
\end{definition}
A deformation $\mathcal{N}_{t}$ of $\mathcal{N}$ is said to be trivial if  $\mathcal{N}_{t}$ is  equivalent to $\mathcal{N}$, viewed as an $n$-ary  algebra on  $\mathcal{N}[[t]]$.

Let $(\mathcal{N},[\cdot ,...,\cdot ])$ be an $n$-Lie  algebra and $[\cdot ,...,\cdot ]_{1}\in Z^{2}(\mathcal{N},\mathcal{N})$.\\
The $2$-cocycle $[\cdot ,...,\cdot ]_{1}$ is said to be \emph{integrable} if there exists a family $([\cdot ,...,\cdot ]_{i})_{i \geq 0}$ such that $[\cdot ,...,\cdot ]_{t}=\sum\limits_{i\geq0}t^{i}[.,.]_{i}$
defines a formal deformation $\mathcal{N}_{t}=(\mathcal{N},[\cdot ,...,\cdot ]_{t})$ of $\mathcal{N}$.

\begin{theorem}
Let $(\mathcal{N},[\cdot ,...,\cdot ])$ be an $n$-Lie algebra and  $(\mathcal{N}[[t]],[\cdot ,...,\cdot ]_t)$, where $[\cdot ,...,\cdot ]_t=\dl\sum_{i\geq0}t^i[\cdot ,...,\cdot ]_i$, be a one-parameter formal deformation. 
\begin{enumerate}
\item The first term  $[\cdot, ....,\cdot]_1$ is a 2-cocycle, that is $[\cdot, ....,\cdot]_1\in Z^2(\N,\N)$.
\item There exists an equivalent deformation $\mathcal{N}^{'}_{t}=(\mathcal{N},[\cdot ,...,\cdot ]^{^{'}}_{t})$, where $[\cdot ,...,\cdot ]^{'}_{t}=\sum\limits_{i\geq 0}t^{i}[\cdot ,...,\cdot ]^{'}_{i}$ such that $[\cdot ,...,\cdot ]^{'}_{1} \in Z^{2}(\mathcal{N},\mathcal{N})$ and
    $[\cdot ,...,\cdot ]^{'}_{1}\not \in B^{2}(\mathcal{N},\mathcal{N})$.\\
    Moreover, if $H^{2}(\mathcal{N},\mathcal{N})=0$, then every one-parameter formal deformation is trivial.
\end{enumerate}
\end{theorem}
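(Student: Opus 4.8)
The plan is to read everything off the deformation equation $\sum_{i=0}^{k}[\cdot,\dots,\cdot]_i\circ[\cdot,\dots,\cdot]_{k-i}=0$ together with the transformation law of the first-order term under a formal equivalence. For the first assertion I would extract the coefficient of $t$, i.e. the case $k=1$, which reads $[\cdot,\dots,\cdot]_0\circ[\cdot,\dots,\cdot]_1+[\cdot,\dots,\cdot]_1\circ[\cdot,\dots,\cdot]_0=0$. This is precisely the condition already identified above the theorem as $\delta^2[\cdot,\dots,\cdot]_1=0$, so $[\cdot,\dots,\cdot]_1\in Z^2(\N,\N)$.

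For the second assertion the key computation is the effect of an equivalence $\phi_t=\mathrm{Id}+t\phi_1+\cdots$ on the first-order bracket. Expanding the equivalence relation \eqref{dAN} to order $t$ and using the explicit coboundary formula for $\delta^1$ recalled before the theorem, I would obtain
\[
[\cdot,\dots,\cdot]'_1=[\cdot,\dots,\cdot]_1-\delta^1\phi_1 .
\]
Hence the class of $[\cdot,\dots,\cdot]_1$ in $H^2(\N,\N)$ is an equivalence invariant. If $[\cdot,\dots,\cdot]_1\notin B^2(\N,\N)$ the given deformation already satisfies the claim (take $\phi_t=\mathrm{Id}$); if instead $[\cdot,\dots,\cdot]_1=\delta^1\phi_1\in B^2(\N,\N)$, then the equivalence $\phi_t=\mathrm{Id}+t\phi_1$ produces an equivalent deformation whose first-order term vanishes. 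For this new deformation the order-$2$ equation collapses to $\delta^2[\cdot,\dots,\cdot]'_2=0$, so the lowest surviving term is again a $2$-cocycle; repeating with $\phi_t=\mathrm{Id}+t^2\phi_2$, and so on, pushes the first nontrivial contribution to higher order. The normalization stops as soon as a cocycle that is not a coboundary is encountered, and if it never stops the deformation is equivalent to the trivial one.

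For the Moreover clause, when $H^2(\N,\N)=0$ one has $Z^2=B^2$, so the cocycle obtained at each stage is always a coboundary and can therefore be eliminated. I would organize this as an induction producing automorphisms $\mathrm{Id}+t^k\phi_k$ that successively kill the order-$k$ term, and assemble their composition into a single formal automorphism $\phi_t$ conjugating $[\cdot,\dots,\cdot]_t$ back to the original bracket, i.e. proving triviality.

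The main obstacle is the bookkeeping behind this limit. I must check that each factor $\mathrm{Id}+t^k\phi_k$ alters only the terms of order $\ge k$, so that the coefficient of any fixed power $t^N$ stabilizes after finitely many steps and the infinite composition converges $t$-adically to a genuine element of $\mathrm{Id}+t\,\mathrm{End}(\N)[[t]]$. Simultaneously I must verify that after each elimination the new lowest-order term is still a $2$-cocycle, which is exactly what keeps the induction self-sustaining and is itself a consequence of the order-$k$ deformation equation collapsing to $\delta^2[\cdot,\dots,\cdot]'_k=0$ once all lower terms vanish.
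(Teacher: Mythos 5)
Your proof is correct and follows essentially the same route as the paper, which gives no explicit argument beyond the remark that ``the proof is similar to the case $n=2$'': you have simply carried out that classical Gerstenhaber--Nijenhuis--Richardson argument in the $n$-ary setting, reading the cocycle condition off the $k=1$ deformation equation, deriving the transformation law $[\cdot,\dots,\cdot]'_1=[\cdot,\dots,\cdot]_1-\delta^1\phi_1$ from \eqref{dAN}, and iterating with factors $\mathrm{Id}+t^k\phi_k$ whose $t$-adic convergence you correctly flag as the only bookkeeping to check.
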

The proof is similar to the case $n=2$.

\subsection{Noncommutative one-parameter formal deformations}
In previous formal deformation theory, the  parameter commutes with the
original algebra. Motivated by some nonclassical deformation appearing in
quantization of Nambu mechanics, Pinczon introduced a deformation called
noncommutative deformation where the parameter no longer commutes with the
original algebra. He developed also the associated cohomology \cite{pincson}.

Let $\N$ be a $\corps$-vector space and $\sigma $ be an endomorphism of $\N$. We
give $\N[[t]]$ a $\corps [[t]]$-bimodule structure defined for every $a_{p}\in
\A,\lambda _{q}\in \corps$ by : 
\begin{eqnarray*}
\sum_{p\geq 0}a_{p}t^{p}\cdot \sum_{q\geq 0}\lambda _{q}t^{q}
&=&\sum_{p,q\geq 0}\lambda _{q}a_{p}t^{p+q}, \\
\sum_{q\geq 0}\lambda _{q}t^{q}\cdot \sum_{p\geq 0}a_{p}t^{p}
&=&\sum_{p,q\geq 0}\lambda _{q}\sigma ^{q}(a_{p})t^{p+q}.
\end{eqnarray*}

\begin{definition}
A $\sigma $-deformation of an $n$-ary algebra $\N$ is a $\corps$-algebra structure on $%
\N[[t]]$ which is compatible with the previous $\corps[[t]]$-bimodule structure
and such that $$\N[[t]]/(\N[[t]]t)\cong \A.$$
\end{definition}
A generalization of these deformations was proposed  by F. Nadaud \cite{Nadaud1} where
he considered deformations based on two commuting endomorphisms $\sigma $ and 
$\tau $. The $\corps[[t]]$-bimodule structure on $\N[[t]]$ is defined for $a\in \N$
by the formulas $t\cdot a=\sigma (a)t$ and $a\cdot t=\tau (a)t$, ($a\cdot t$
being the right action of $t$ on $a$).

The remarquable difference with commutative deformation is that the Weyl
algebra of differential operators with polynomial coefficients over $\real$ is
rigid for commutative deformations but has a nontrivial noncommutative
deformation; it is given by the enveloping algebra of the Lie algebra
$osp(1,2)$.

\section{Global deformations}
This approach follows from a general fact in Schlessinger's works \cite{schliss1} 
and was developed by A. Fialowski and her collaborators for different kind of algebras (Lie algebra, Leibniz algebras ....\cite{Fialowski88,Fialowski90,Fialowski99,Fialowski02,Fialowski-Operad}).  In the sequel we extend this approach to $n$-Lie algebras. Let $B$ be a commutative 
algebra over a field $\corps$ of characteristic 0 and an
augmentation morphism $\varepsilon :\mathcal{A}\rightarrow \corps$ (a $\corps$-algebra homomorphism, $%
\varepsilon (1_{B})=1$). We set $m_{\varepsilon }=Ker(\varepsilon )$; $%
m_{\varepsilon }$ is a maximal ideal of $B$. A maximal ideal $m$ of $%
B $ such that $\A/m\cong \corps$, defines naturally an augmentation. We call $(B,m)$
base of deformation.

\begin{definition}
A global deformation of base $(B,m)$ of an $n$-Lie algebra $(\mathcal{N},[\cdot ,...,\cdot ]) $  is a structure of $B$-algebra on the tensor product $%
B\otimes _{\corps}\mathcal{N}^n $ with a bracket $[\cdot ,...,\cdot ]_{B }$ such that $%
\varepsilon \otimes id:B\otimes \mathcal{N} \rightarrow \corps\otimes \mathcal{N} =%
\mathcal{N} $ is an $n$-ary algebra homomorphism. i.e. $\forall a,b\in B$ and $\forall
x,y\in \mathcal{A} $ :

\begin{enumerate}
\item  $[a_1\otimes x_1,\cdots, a_n\otimes x_n]_B=(a_1...a_n\otimes id)[1\otimes x_1,\cdots, 1\otimes x_n ] _{B}\quad (B-$linearity$)$
\item  The bracket $ [\cdot ,...,\cdot ]_{B }$ satisfies Nambu identity.
\item  $\varepsilon \otimes id\left([1\otimes x_1,\cdots, 1\otimes
x_n]_B\right) =1\otimes [x_1 ,...,x_n ] $
\end{enumerate}
\end{definition}

Every formal deformation of an $n$-Lie algebra $\mathcal{N}$, in Gerstenhaber sense, is a global deformation with
a basis $\left( B,m\right) $ where $B=\corps[[t]]$ and $m=t \corps[[t]]$.

\begin{remark}
Condition 1 shows that to describe a global deformation it is enough to know
the brackets $[1\otimes x_1 ,...,1\otimes x_n ] _{B },$ where $x_1,\cdots, x_n\in \mathcal{N} .
$ The conditions 1 and 2 show that we have an $n$-Lie algebra and the last
condition insures the compatibility with the augmentation. We deduce 
\[
[1\otimes x_1,\cdots, 1\otimes x_n]_B=1\otimes [x_1 ,...,x_n ]  +\sum_{i}\alpha
_{i}\otimes z_{i}\quad \text{with }\alpha _{i}\in m,\ z_{i}\in \mathcal{N}.
\]
\end{remark}

\begin{itemize}
\item A global deformation is called {\em trivial\em} if the structure of $n$-ary $B$-algebra on $B\otimes _{\corps}\mathcal{N} $ satisfies $[1\otimes x_1 ,...,1\otimes x_n ]  _{B }=1\otimes [x_1 ,...,x_n ].$
\item Two deformations of an $n$-Lie algebra with the same base are called {\em equivalent \em} (or
isomorphic) if there exists an algebra isomorphism between the two copies of 
$B\otimes _{\corps}\mathcal{N^n}$, compatible with $\varepsilon \otimes id.$
\item A global deformation with base $(B,m)$ is called {\em local \em} if $B$ is a local $\corps$
-algebra with a unique maximal ideal $m_{B}$. If, in addition $m^{2}=0$, 
the deformation is called {\em infinitesimal \em}.
\item Let $B^{\prime }$ be another commutative algebra over $\corps$ with augmentation $%
\varepsilon ^{\prime }:B^{\prime }\rightarrow \corps$ and $\Phi :B\rightarrow
B^{\prime }$ an algebra homomorphism such that $\Phi (1_{B})=1_{B^{\prime }}$
and $\varepsilon ^{\prime }\circ \Phi =\varepsilon $. If a deformation $%
\mu_{B} $ with a base $(B,Ker(\varepsilon ))$ of $\mathcal{A} $ is given we call
push-out $[\cdot ,...,\cdot ]_{B ^{\prime }}=\Phi _{*}[\cdot ,...,\cdot ]_{B} $ a deformation of $%
\mathcal{A} $ with a base $(B^{\prime },Ker(\varepsilon ^{\prime }))$ with the
following algebra structure on $B^{\prime }\otimes \mathcal{A} %
=\left( B^{\prime }\otimes _{B}B\right) \otimes \mathcal{A} =B^{\prime }\otimes
_{B}\left( B\otimes \mathcal{A} \right) $ 
$$[a_{1}^{\prime}\otimes _{B}\left( a_{1}\otimes x_{1}\right) ,\cdots , a_{n}^{\prime }\otimes
_{B}\left( a_{n}\otimes x_{n}\right) ]_{B ^{\prime }} :=a_{1}^{\prime }....a_{n}^{\prime
}\otimes _{B}[ a_{1}\otimes x_{1},\cdots , a_{n}\otimes x_{n}]_{B }, $$
 with $a_{1}^{\prime },a_{2}^{\prime }\in B^{\prime
},a_{1},a_{2}\in B,x_{1},x_{2}\in \mathcal{A} $. The algebra $B^{\prime }$ is viewed
as a $B$-module with the structure $aa^{\prime }=a^{\prime }\Phi \left(
a\right)$. Suppose that  $$[1\otimes
x_1,\cdots, 1\otimes x_n]_B=1\otimes [x_1,\cdots,x_n]+\sum_{i}\alpha _{i}\otimes z_{i}\quad $$ with 
$\alpha _{i}\in m,\ z_{i}\in \mathcal{N} $. Then $$[1\otimes
x_1,\cdots, 1\otimes x_n]_{B^{\prime
}}=1\otimes \mu [
x_1,\cdots, x_n]+\sum_{i}\Phi (\alpha
_{i})\otimes z_{i}\quad $$ with $\alpha _{i}\in m,\ z_{i}\in \mathcal{N} $.
\end{itemize}

One may address the problem of finding, for a fixed algebra, particular deformations which induces all the others in 
the space of all deformations (moduli space) or in a fixed
category of deformations.    The problem of constructing universal or versal deformations of Lie algebras was considered 
for the categories of deformations over infinitesimal local algebras and complete 
local algebras 
(see
 \cite{Fialowski88},\cite{Fialowski99},
 \cite{Fialowski02}). They show that if we consider the
infinitesimal deformations, i.e. the deformations over local algebras $B$ such that
$m_{B}^{2}=0$ where  $m_{B}$ is the maximal ideal, then there exists a universal
deformation (the morphism between base algebras is unique). If we consider the category of complete local rings, then there does
not exist a universal deformation but only versal deformation (there is no unicity for the
morphism).

Let $B$ be a complete local algebra over $\corps$, so $B=$ 
$\overleftarrow{\lim }_{n\rightarrow \infty }(B/m^{n})$  (inductive limit), where $m$ is the
maximal ideal of $B$ and we assume that $B/m\cong \corps$.

A formal global deformation of $\mathcal{N}$ with base $\left( B,m\right) $ is an
algebra structure on the completed tensor product $B\stackrel{\wedge }{%
\otimes }\mathcal{N} =\overleftarrow{\lim }_{n\rightarrow \infty }
((B/m^{n})\otimes \mathcal{N} )$ such that $\varepsilon \stackrel{\wedge }{%
\otimes }id:B\stackrel{\wedge }{\otimes }\mathcal{N} \rightarrow K\otimes 
\mathcal{N} =\mathcal{N} $ is an algebra homomorphism.

The formal global deformation of $\mathcal{N} $ with base $\left(
\corps [[t]],t\corps [[t]]\right) $ are the same as formal one parameter deformation
of Gerstenhaber.

%



\section{The algebraic varieties $\mathcal{L}ie^n_{m}$  and Degenerations }

Let $\N $ be an
$m$-dimensional vector space over  $\K$ and $\{e_{1},\cdots ,e_{m}\}$ be a basis
of $\N$. An $n$-linear bracket  $[\cdot ,...,\cdot ] $  can be defined  by specifying the $m^{n+1}$
structure constants $C_{i_1,\cdots, i_n}^{k}\in \corps$ where 
$$[e_{i_1} ,...,e_{i_m} ]=\sum_{k=1}^{m}C_{i_1,\cdots, i_n}^{k}e_{k}.$$ The Nambu identity and skew-symmetry 
 limits the sets of structure constants 
 $C_{i_1,\cdots, i_n}^{k}$ to a subvariety of $\K ^{m^2 (m-1)\cdots (m-n+1)}$ which we denote by $\mathcal{L}ie^n_{m}$. 
 It is generated by the polynomial relations
\begin{eqnarray}\label{EquationVarNambu}
\sum_{k=1}^{m}{C_{j_1,\cdots , j_n}^{k} C_{i_1,\cdots, i_{n-1},k}^{s}}-
\sum_{r=1}^{n}\sum_{k=1}^{m}{C_{i_1,\cdots , i_{n-1},j_r}^{k}C_{j_1,\cdots , j_{r-1},k, j_{r+1},\cdots  j_{n}}^{s}}=0, \\ \quad 1\leq i_1\cdots , i_{n-1},j_1,\cdots,j_n,s\leq m. \nonumber
\end{eqnarray}
Therefore, $\mathcal{L}ie^n_{m}$ carries a structure of algebraic variety which  is quadratic, non regular and in general non-reduced. The
natural action of the group $GL_m(\corps)$ corresponds to the change of basis :
two $n$-Lie algebras  $(\N,[\cdot ,...,\cdot ]  _{1})$ and $(\N,[\cdot ,...,\cdot ]  _{2})$ are isomorphic if there exists 
$f$ in $GL_m(\corps)$ such that $\N_2=f\cdot \N_1$, that is : 
\[
\forall x_1,\cdots, x_n\in \N\quad [x_1 ,...,x_n ]_2 =f^{-1}([f(x_1) ,...,f(x_n) ]_1.
\]

The orbit of an $n$-Lie algebra $\N_0=(\N,[\cdot ,...,\cdot ]_{0})$, denoted by $\vartheta \left( \N
_{0}\right) $, is the set of all its isomorphic $n$-Lie algebras.


A point in $\mathcal{L}ie^n_{m}$ is defined by $m^2 (m-1)\cdots (m-n+1)$ parameters, which are the structure constants
$C_{i_1,\cdots, i_n}^{k}$ satisfying \eqref{EquationVarNambu}.
The orbits  are in 1-1-correspondence with the isomorphism
classes of $m$-dimensional $n$-Lie algebras. 
The stabilizer subgroup of $\N _0$ 
$$ stab\left( \N_0\right) =\left\{ f\in
GL_m\left( \corps\right) :\N_0=f\cdot \N_0\right\} $$ is  $Aut\left(
\N_0\right) $, the automorphism group of $\N_0$. 
The orbit $\vartheta \left( \N_0\right) $ is identified with the homogeneous space $GL_m\left(
\corps\right) /Aut\left( \N_0\right) $. Then 
$$
\dim \vartheta \left( \N_0\right) =m^2-\dim Aut\left( \N_0\right).
$$
The orbit $\vartheta \left( \N_0\right) $ is provided, when $\corps=\mathbb{C}$ (a complex
field), with the structure of a differentiable manifold. In fact, $\vartheta
\left( \N_0\right) $ is the image through the action of the Lie group $%
GL_{m}\left( \corps\right) $ of the point $\N_0$, considered as a point of $%
Hom\left( \N^{\otimes n} ,\N \right)$. The Zariski tangent space to $\mathcal{L}ie^n_{m}$ at the point $\N_0$
corresponds to $Z^{2}(\N,\N)$ and the tangent space to the orbit corresponds 
to $B^{2}(\N ,\N)$.

The first approach to  study  varieties $\mathcal{L}ie^n_{m}$ is to
establish classifications of $n$-Lie  algebras up to isomorphisms for a fixed
dimension. Classification of $n$-Lie algebras of dimension less than or equal to $n + 2$ is known, see \cite{Filippov, BSZ}. We have the following results.

\begin{theorem}[\cite{Filippov}] \label{AKMSd4}
Any $n$-Lie algebra $\N$ of dimension less than or equal to $n+1$ is isomorphic to one of the following $n$-ary algebras: (omitted brackets are either obtained by skew-symmetry or  $0$)
\begin{enumerate}
\item If $dim \N < n$ then $A$ is abelian.
\item If $dim \N = n$, then we have 2 cases:
\begin{enumerate}
\item $A$ is abelian.
\item $\AKMSbracket{e_1,...,e_n} = e_1.$
\end{enumerate}
\item if $dim \N = n+1$ then we have the following cases:
\begin{enumerate}
\item $A$ is abelian.
\item $\AKMSbracket{e_2,...,e_{n+1}} = e_1$.
\item $\AKMSbracket{e_1,...,e_n}=e_1$.
\item $\AKMSbracket{e_1,...,e_{n-1},e_{n+1}} = a e_n + b e_{n+1} ; \AKMSbracket{e_1,...,e_n} = c e_n + d e_{n+1}$, with $C=
\begin{pmatrix}
a & b \\
c & d
\end{pmatrix}$ an invertible matrix. Two such algebras, defined by matrices $C_1$ and $C_2$, are isomorphic if and only if there exists a scalar $\alpha$ and an invertible matrix $B$ such that $C_2 = \alpha B C_1 B^{-1}$.
\item $\AKMSbracket{e_1,...,\widehat{e_i},...,e_n}= a_i e_i$ for $ 1 \leq i \leq r$, $2<r= \operatorname{dim} D^1(A)\leq n$, $a_i \neq 0$
\item  $\AKMSbracket{e_1,...,\widehat{e_i},...,e_n}= a_i e_i$ for $ 1 \leq i \leq n+1$ which is simple.
\end{enumerate}

\end{enumerate}
\end{theorem}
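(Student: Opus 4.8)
The plan is to proceed by cases on $\dim\N$, in each regime reducing the problem to normalizing the structure constants under the change-of-basis action that defines isomorphism of $n$-Lie algebras. The two small cases are immediate. If $\dim\N<n$, then any $n$ vectors of $\N$ are linearly dependent, so multilinearity together with skew-symmetry forces every bracket $[x_1,\dots,x_n]$ to vanish and $\N$ is abelian. If $\dim\N=n$ with basis $e_1,\dots,e_n$, skew-symmetry collapses all the data into the single vector $v=[e_1,\dots,e_n]$, since every other $n$-tuple of basis vectors repeats an index; inserting basis vectors into the derivation form of the Nambu identity from the Remark following \eqref{adjointMap} shows that both sides reduce to the same multiple of $v$, so no further relation is imposed. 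Under a change of basis $f$ only the fully permuted term survives in $[f(e_1),\dots,f(e_n)]$, giving $v\mapsto\det(f)\,f^{-1}(v)$; hence $v=0$ gives the abelian algebra and $v\neq0$ is normalized, by choosing $f(e_1)=v$ and $\det f=1$, to $[e_1,\dots,e_n]=e_1$.

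The substantive case is $\dim\N=n+1$. Here I would fix a basis $e_1,\dots,e_{n+1}$ and package the bracket into the $n+1$ vectors $X_a=[e_1,\dots,\widehat{e_a},\dots,e_{n+1}]$, i.e. into the $(n+1)\times(n+1)$ structure matrix $M$ with $X_a=\sum_b M_{ab}e_b$; equivalently I would track the adjoint operators $L_x$, $x\in\Lambda^{n-1}\N$, whose images span the derived algebra $D^1(\N)$. The hard part is to turn the Nambu identity into workable constraints on this data. I would do so by feeding basis tuples into the two operator identities already available: ``$L_x$ is a derivation of the bracket'' (the Remark) and Lemma~\ref{3.1}, $L([x,y])\cdot z=L(x)\cdot(L(y)\cdot z)-L(y)\cdot(L(x)\cdot z)$. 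The first shows each $L_x$ preserves $D^1(\N)$ and acts on it very rigidly, while the second forces the family $\{L_x\}$ to close into a small Lie algebra of operators. Carrying out this bookkeeping should establish that the only surviving invariants are $r=\dim D^1(\N)$ and the Jordan (similarity) type of the induced action of the $L_x$ on $D^1(\N)$.

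The list then falls out by cases on $r$ once the group action is made explicit. An adjugate computation gives $M\mapsto\det(f)\,f^{-1}M(f^{-1})^{\top}$, a congruence twisted by a determinant; on the operator side this is conjugation of $L_x|_{D^1(\N)}$ up to an overall scalar arising from rescaling the tuple $x$. For $r=0$ one recovers the abelian algebra 3(a). For $r=1$ the only choice is whether the generator of $D^1(\N)$ is an eigenvector with nonzero eigenvalue, which happens exactly when it occurs among its own arguments and yields $[e_1,\dots,e_n]=e_1$ (case 3(c)), or whether the relevant operator is nilpotent, which yields $[e_2,\dots,e_{n+1}]=e_1$ (case 3(b)). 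When $D^1(\N)$ is two-dimensional and the operator is indecomposable over $\K$ one reaches case 3(d), and because the residual freedom is conjugation together with an independent scalar, the sharp test is precisely $C_2=\alpha B C_1 B^{-1}$. Finally, for $r\geq 3$ the derivation constraints rigidify the operators to be simultaneously diagonalizable, so $M$ is congruent to a diagonal form: this produces the family 3(e) with $2<r=\dim D^1(\N)\le n$ and, at full rank $r=n+1$ with a nondegenerate form, the simple algebra 3(f), whose $n=3$, $\K=\real$ instance is the Euclidean cross-product example.

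The two genuinely delicate points are, first, the constraint extraction: verifying that the Nambu system on $M$ (equivalently on the family $\{L_x\}$) has no solutions beyond those whose invariants are rank plus similarity type, and in particular that $r\geq 3$ excludes nontrivial Jordan blocks. Second, pinning down the residual stabilizer in the rank-two indecomposable case so as to recover exactly the similarity-up-to-scalar criterion of 3(d). Once these are settled, the remaining normalizations of $M$ to the listed canonical forms are routine linear algebra.
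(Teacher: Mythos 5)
The paper does not prove this theorem; it is quoted from Filippov's 1985 paper \cite{Filippov}, so there is no internal proof to compare your argument against. Your overall strategy --- dispose of $\dim\N<n$ by skew-symmetry, reduce $\dim\N=n$ to the single vector $v=[e_1,\dots,e_n]$ and normalize it under the $v\mapsto\det(f)f^{-1}(v)$ action, and in dimension $n+1$ encode the bracket in the matrix $M$ of the vectors $[e_1,\dots,\widehat{e_a},\dots,e_{n+1}]$ acted on by a determinant-twisted congruence --- is the standard route, and is the one followed by Filippov and the later literature (e.g.\ \cite{BSZ}). The two small cases are essentially complete as you state them, though you should still verify explicitly that in dimension $n$ the Nambu identity holds for \emph{every} choice of $v$ (a short computation, but it is the reason the list in case 2 has only two entries rather than further constraints on $v$).

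The argument for $\dim\N=n+1$, however, is an outline with the decisive step missing, and you say so yourself. Everything in parts 3(b)--(f) --- why rank $1$ splits into exactly the two cases (b) and (c), why rank $2$ admits an arbitrary invertible $2\times 2$ block supported in the specific pattern of 3(d), and above all why rank $\geq 3$ forces simultaneous diagonalizability and excludes Jordan blocks --- comes out of the explicit system of quadratic relations that the Nambu identity imposes on $M$ (the relations \eqref{EquationVarNambu} specialized to $m=n+1$). Asserting that ``carrying out this bookkeeping should establish'' that the only surviving invariants are the rank of $D^1(\N)$ and the similarity type of the induced operators is precisely the content of the theorem in this dimension, not a step toward it: without writing down those relations and solving them, one cannot rule out a rank-$3$ derived algebra carrying a nontrivial Jordan block, nor justify that the rank-$2$ case can always be put in the form 3(d), nor complete the stabilizer computation that yields the criterion $C_2=\alpha B C_1 B^{-1}$. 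So the proposal identifies the right framework and the right invariants, but the core of case 3 remains to be supplied.
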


\begin{theorem}[\cite{BSZ} ] \label{AKMSd5}
Let $\mathbb{K}$ be an algebraically closed field. Any $(n+2)$-dimensional $n$-Lie algebra $\N$  is isomorphic to one of the $n$-ary algebras listed below, where $\N^1$ denotes $\AKMSbracket{\N,...,\N}$:

\begin{enumerate}
\item If $dim \N^1 = 0$ then $\N$ is abelian.
\item If $dim \N^1 = 1$, let $\N^1=\langle e_1 \rangle$, then we have 
\begin{enumerate}
\item $\N^1 \subseteq Z(\N)$ : $\AKMSbracket{e_2,...,e_{n+1}}=e_1$.
\item $\N^1 \nsubseteq Z(\N)$ : $\AKMSbracket{e_1,...,e_n}=e_1$.
\end{enumerate}
\item If $dim \N^1 = 2$, let $\N^1 = \langle e_1,e_2 \rangle$, then we have 
\begin{enumerate}
\item $\AKMSbracket{e_2,...,e_{n+1}}=e_1 ; \AKMSbracket{e_3,...,e_{n+2}}=e_2$.
\item $\AKMSbracket{e_2,...,e_{n+1}}=e_1 ; \AKMSbracket{e_2,e_4,...,e_{n+2}}=e_2 ; \AKMSbracket{e_1,e_4,...,e_{n+2}}=e_1$.
\item $\AKMSbracket{e_2,...,e_{n+1}}=e_1 ; \AKMSbracket{e_1,e_3,...,e_{n+1}}=e_2$.
\item $\AKMSbracket{e_2,...,e_{n+1}}=e_1 ; \AKMSbracket{e_1,e_3,...,e_{n+1}}=e_2 ; \AKMSbracket{e_2,e_4,...,e_{n+2}}=e_2 ;\\ \AKMSbracket{e_1,e_4,...,e_{n+2}}=e_1$.
\item $\AKMSbracket{e_2,...,e_{n+1}}= \alpha e_1 + e_2 ; \AKMSbracket{e_1,e_3,...,e_{n+1}}=e_2$.
\item $\AKMSbracket{e_2,...,e_{n+1}}=\alpha e_1+e_2 ; \AKMSbracket{e_1,e_3,...,e_{n+1}}=e_2 ; \AKMSbracket{e_2,e_4,...,e_{n+2}}=e_2 ;\\ \AKMSbracket{e_1,e_4,...,e_{n+2}}=e_1$.
\item $\AKMSbracket{e_1,e_3,...,e_{n+1}}=e_1 ; \AKMSbracket{e_2,e_3,...,e_{n+1}}=e_2$.
\end{enumerate}
where $\alpha \in \mathbb{K} \setminus \left\{0\right\}$
\item If $dim \N^1 = 3$, let $\N^1=\langle e_1,e_2,e_3 \rangle$, then we have 
\begin{enumerate}
\item $\AKMSbracket{e_2,...,e_{n+1}}=e_1 ; \AKMSbracket{e_2,e_4,...,e_{n+2}}=-e_2 ; \AKMSbracket{e_3,...,e_{n+2}}=e_3$.
\item $\AKMSbracket{e_2,...,e_{n+1}}=e_1 ; \AKMSbracket{e_3,...,e_{n+2}}=e_3 + \alpha e_2 ; \AKMSbracket{e_2,e_4,...,e_{n+2}}=e_3 ; \AKMSbracket{e_1,e_4,...,e_{n+2}}=e_1$.
\item $\AKMSbracket{e_2,...,e_{n+1}}=e_1 ; \AKMSbracket{e_3,...,e_{n+2}}=e_3 ; \AKMSbracket{e_2,e_4,...,e_{n+2}}=e_2 ; \AKMSbracket{e_1,e_4,...,e_{n+2}}=2e_1$.
\item $\AKMSbracket{e_2,...,e_{n+1}}=e_1 ; \AKMSbracket{e_1,e_3,...,e_{n+1}}=e_2 ; \AKMSbracket{e_1,e_2,e_4,...,e_{n+1}}=e_3$.
\item $\AKMSbracket{e_1,e_4,...,e_{n+2}}=e_1 ; \AKMSbracket{e_2,e_4,...,e_{n+2}}=e_3 ; \AKMSbracket{e_3,...,e_{n+2}}=\beta e_2+(1+\beta)e_3$,\\ $\beta \in \mathbb{K} \setminus \left\{0,1\right\}$.
\item $\AKMSbracket{e_1,e_4,...,e_{n+2}}=e_1 ; \AKMSbracket{e_2,e_4,...,e_{n+2}}=e_2 ; \AKMSbracket{e_3,...,e_{n+2}}=e_3$.
\item $\AKMSbracket{e_1,e_4,...,e_{n+2}}=e_2 ; \AKMSbracket{e_2,e_4,...,e_{n+2}}=e_3 ; \AKMSbracket{e_3,...,e_{n+2}}=s e_1 + t e_2 + u e_3$. And $n$-Lie algebras corresponding to this case with coefficients $s,t,u$ and $s',t',u'$ are isomorphic if and only if there exists a non-zero element $r \in \K$ such that 
\[ s=r^3 s' ; t=r^2 t' ; u=r u' .\]
\end{enumerate}
\item If $dim \N^1 = r$ with $4 \leq r \leq n+1$, let $A^1 = \langle e_1,e_2,...,e_r \rangle$, then we have 
\begin{enumerate}
\item $\AKMSbracket{e_2,...,e_{n+1}}=e_1 ; \AKMSbracket{e_3,...,e_{n+2}}=e_2 ; ... ;  \AKMSbracket{e_2,...,e_{i-1},e_{i+1},...,e_{n+2}}=e_i ; \\ \AKMSbracket{e_2,...,e_{r-1},e_{r+1},...,e_{n+2}}=e_r$.
\item $\AKMSbracket{e_2,...,e_{n+1}}=e_1 ; ... ; \AKMSbracket{e_1,...,e_{i-1},e_{i+1},e_{n+1}}=e_i ; ... ; \\  \AKMSbracket{e_1,...,e_{r-1},e_{r+1},e_{n+1}}=e_r$.
\end{enumerate}
\end{enumerate}
\end{theorem}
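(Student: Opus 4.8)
The plan is to organize the classification by the dimension $r=\dim\N^1$ of the derived ideal $\N^1=\AKMSbracket{\N,\dots,\N}$ and to reduce the structure constants to the tabulated normal forms case by case. What makes dimension $n+2$ tractable is a duality. Fixing a basis $\{e_1,\dots,e_{n+2}\}$ identifies $\Lambda^n\N$ with $\Lambda^2\N$ by sending each $n$-subset of indices to its complementary pair; concretely, this records the skew-symmetric bracket $\mu\colon\Lambda^n\N\to\N$ as a skew-symmetric bilinear map $\varphi\colon\Lambda^2\N\to\N$, with $\varphi(e_j\wedge e_l)=\pm\AKMSbracket{e_{i_1},\dots,e_{i_n}}$ where $\{i_1,\dots,i_n\}$ is the complement of $\{j,l\}$. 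Thus the whole $n$-Lie structure is encoded by a binary ``algebra'' on the $(n+2)$-dimensional space, the identification being determined only up to a scalar (the choice of volume form). First I would record that $\N^1$ is an ideal, so that all later reductions may be carried out on $\varphi$ and on the flag $\N^1\subseteq\N$.

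Next I would translate the Nambu identity \eqref{NambuIdentity} into a condition on $\varphi$; this is the computational core. Substituting the duality into \eqref{NambuIdentity} and collecting terms turns the $n$-ary identity into a quadratic constraint on the structure constants of $\varphi$, from which one reads off that $\AKMSbracket{\N^1,\N,\dots,\N}\subseteq\N^1$ and, after a short argument excluding the perfect case $\N^1=\N$, the bound $\dim\N^1\le n+1$. This reduces the theorem to the finitely many possibilities $r\in\{0,1,2,3\}\cup\{4,\dots,n+1\}$ appearing in the statement.

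For each value of $r$ I would choose a basis adapted to $\N^1=\langle e_1,\dots,e_r\rangle$, use skew-symmetry together with the translated constraint to decide which brackets may be nonzero, and then exhaust the remaining freedom — the subgroup of $GL(\N)$ stabilizing $\N^1$, acting together with the scalar rescaling of the identification — to bring the surviving constants to normal form. Here the hypothesis that $\K$ is algebraically closed enters decisively: it lets me put the operators built from $\varphi$ into Jordan or rational canonical form (producing the eigenvalue parameters $\alpha,\beta$ and the companion-type bracket $s e_1+t e_2+u e_3$) and guarantees that the polynomial equations separating the orbits can be solved. Cases $r=0,1$ are immediate; for larger $r$ the analysis branches according to whether $\N^1$ lies in the centre and according to the Jordan type of the multiplication operators induced by $\varphi$, which produces the subcases recorded in the statement.

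The main obstacle is twofold. The conceptual crux is the faithful translation of \eqref{NambuIdentity}: one must check that the quadratic constraint on $\varphi$ is \emph{equivalent} to the Filippov identity, with no loss, so that every normal form really is an $n$-Lie algebra and none is missed. The subtler bookkeeping lies in the continuous families, where I must compute exactly how an admissible base change (composed with the scalar) acts on the parameters, so as to pin down the stated isomorphism criteria — the conjugacy relation $C_2=\alpha B C_1 B^{-1}$ of Theorem~\ref{AKMSd4} and, above all, the weighted scaling $s=r^3 s'$, $t=r^2 t'$, $u=r u'$ of case 4(g). The exponents $3,2,1$ are the degrees in the characteristic polynomial of the shift $e_1\mapsto e_2\mapsto e_3\mapsto s e_1+t e_2+u e_3$ induced by multiplication by $e_4\wedge\cdots\wedge e_{n+2}$ on $\N^1$: rescaling the generator of this multiplication by $r$ multiplies $(u,t,s)$ by $(r,r^2,r^3)$. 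Verifying that these weighted invariants are \emph{complete} is the most error-prone step and would be checked last.
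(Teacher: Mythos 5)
You should know at the outset that the paper does not prove this theorem: it is quoted verbatim from Bai--Song--Zhang \cite{BSZ}, so there is no in-text argument to compare yours against. Judged on its own terms, your overall shape — stratify by $r=\dim\N^1$, dualize the $n$-bracket to a degree-two object, and normalize under the stabilizer of the flag $\N^1\subseteq\N$ — is reasonable, and your explanation of the weights $3,2,1$ in case 4(g) as the scaling of the characteristic polynomial of the cyclic operator induced by $e_4\wedge\cdots\wedge e_{n+2}$ is correct.

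There are, however, two concrete gaps. First, the duality is set up with the wrong variance. A volume form identifies $\Lambda^n\N$ with $\Lambda^2\N^*$, not with $\Lambda^2\N$; hence $\mu\in Hom(\Lambda^n\N,\N)$ corresponds to an element of $\Lambda^2\N\otimes\N$, i.e.\ to a bilinear map on $\N^*$, and not to a binary multiplication $\varphi:\Lambda^2\N\to\N$. Your basis-dependent recipe $\varphi(e_j\wedge e_l)=\pm\AKMSbracket{e_{i_1},\dots,e_{i_n}}$ does define a bilinear map, but under a change of basis $g$ its two input slots transform by the contragredient $(g^{-1})^{T}$, with an extra $\det g$ twist, rather than by $g^{-1}$. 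Consequently isomorphism of the $n$-Lie algebras is \emph{not} isomorphism of the binary algebras $\varphi$ in the usual sense, and any normalization step that silently invokes the standard $GL$-action on binary algebras (Jordan form of left multiplications, orbit separation) will produce a wrong list; ``determined only up to a scalar'' understates the discrepancy, and this twisted equivariance must be tracked explicitly in every reduction. Second, the bound $\dim\N^1\le n+1$ is not ``a short argument'': excluding a perfect $(n+2)$-dimensional $n$-Lie algebra is a genuine lemma with its own proof (already for $n=2$ it uses the Levi decomposition; in general it rests on structure theory for $n$-Lie algebras and on Filippov's $(n+1)$-dimensional classification, Theorem~\ref{AKMSd4}), and nothing in the translated quadratic form of \eqref{NambuIdentity} hands it to you for free. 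Beyond these two points, the passage from the strategy to the actual normal forms, and the completeness of the isomorphism invariants in cases 3(d) and 4(g), is asserted rather than carried out, which is precisely where such classifications tend to fail.
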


\

The second approach to study the algebraic variety $\mathcal{L}ie^n_{m}$ is to describe its irreducible components. This problem was considered for binary Lie algebras of small dimensions but it is still open for $n$-Lie algebras. The main approach
uses formal deformations and degenerations.  A degeneration notion is a sort of dual notion of a deformation. It appeared  first in physics literature (Inonu and Wigner
1953 \cite{I_W}). Degeneration is also called specialisation or contraction.
We provide first the geometric definition of a degeneration, using Zariski topology.
\begin{definition}
Let $\N _{0}=(\N,[\cdot ,...,\cdot ]  _{0})$ and $\N _{1}=(\N,[\cdot ,...,\cdot ]  _{1})$ be two $m$-dimensional $n$-Lie algebras.
We said that $\N _{0}$ is a degeneration of $\N_{1}$ if  $\N _{0}$  belongs to the closure of the 
orbit of $\N_{1}$ in $\mathcal{L}ie^n_{m}$ ($\N _{0}\in\overline{\vartheta \left( \N_1\right)}$).

Therefore, $\N_{0}$ and  $\N _{1}$ are in the same irreducible component. 
\end{definition}

A characterization of  degeneration for Lie algebras, in the global deformations viewpoint,  was given by Grunewald and O'Halloran in
 \cite{Grun_OHallo}.  It generalizes naturally to $n$-Lie as follows.
 \begin{theorem}
 Let $\mathcal{N}_0$ and $\mathcal{N}_1$ be two $m$-dimensional $n$-Lie
algebras over $\corps$ with brackets $[\cdot ,...,\cdot ]_0$ and $[\cdot ,...,\cdot ]_1$. The
$n$-Lie algebra $\mathcal{N}_0$ is a degeneration of $\mathcal{N}_1$ if and only if
there is a discrete valuation $\mathbb{K}$-algebra $B$ with residue field $\corps$ whose quotient field
$\mathcal{K}$ is
finitely generated over $\corps$ of transcendence degree one (one parameter), and there is an $m$-dimensional $n$-Lie algebra $[\cdot ,...,\cdot ]
_{B}$ over $B$ such that
 $[\cdot ,...,\cdot ]_B \otimes \mathcal{K} \cong [\cdot ,...,\cdot ]_1 \otimes \mathcal{K}$ and $[\cdot ,...,\cdot ]_B \otimes \corps \cong [\cdot ,...,\cdot ]_0$.
 \end{theorem}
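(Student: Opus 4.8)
The plan is to prove both implications by the standard orbit--closure techniques of geometric invariant theory, exactly as Grunewald and O'Halloran do for Lie algebras, transported to the variety $\mathcal{L}ie^n_m$. I work over an algebraically closed field $\corps$, which is the natural setting for the Zariski closure defining a degeneration. Two structural facts are used throughout: that $\mathcal{L}ie^n_m$ is a closed affine subvariety of the space of structure constants cut out by the polynomial relations \eqref{EquationVarNambu}, so the structure constants are regular coordinate functions on it; and that each orbit $\vartheta(\mathcal{N}_1)$ is locally closed, hence open and dense in its irreducible closure $\overline{\vartheta(\mathcal{N}_1)}$ (the closure being irreducible as the image of the irreducible group $GL_m(\corps)$).

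For the direction $(\Leftarrow)$, suppose we are handed the discrete valuation $\corps$-algebra $B$ with residue field $\corps$ and quotient field $\mathcal{K}$, together with the $m$-dimensional $n$-Lie algebra $[\cdot,...,\cdot]_B$ over $B$. First I would observe that the structure constants of $[\cdot,...,\cdot]_B$ lie in $B$ and satisfy \eqref{EquationVarNambu}, so they define a morphism $\operatorname{Spec}(B)\to\mathcal{L}ie^n_m$. The closed point maps to $\mathcal{N}_0$ via $[\cdot,...,\cdot]_B\otimes\corps\cong[\cdot,...,\cdot]_0$, while the generic point maps to a $\mathcal{K}$-point whose algebra $[\cdot,...,\cdot]_B\otimes\mathcal{K}\cong[\cdot,...,\cdot]_1\otimes\mathcal{K}$ therefore lies in $\vartheta(\mathcal{N}_1)$. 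Since in $\operatorname{Spec}(B)$ the closed point lies in the closure of the generic point, its image $\mathcal{N}_0$ lies in the closure of the image of the generic point, which is contained in $\overline{\vartheta(\mathcal{N}_1)}$. Hence $\mathcal{N}_0$ is a degeneration of $\mathcal{N}_1$.

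For the harder direction $(\Rightarrow)$, assume $\mathcal{N}_0\in\overline{\vartheta(\mathcal{N}_1)}$. The idea is to produce a curve through $\mathcal{N}_0$ whose generic point lands in the orbit, and to take $B$ to be its local ring at $\mathcal{N}_0$. Concretely, I would cut $\overline{\vartheta(\mathcal{N}_1)}$ by generic hyperplanes through $\mathcal{N}_0$ to obtain an irreducible curve $C\subseteq\overline{\vartheta(\mathcal{N}_1)}$ containing $\mathcal{N}_0$; since the orbit is open and dense in $\overline{\vartheta(\mathcal{N}_1)}$, the curve $C$ meets it, so the generic point of $C$ lies in $\vartheta(\mathcal{N}_1)$. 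Normalizing $C$ and localizing at a closed point $p$ over $\mathcal{N}_0$ yields a discrete valuation $\corps$-algebra $B=\mathcal{O}_{\widetilde{C},p}$ with residue field $\corps$ and quotient field $\mathcal{K}=\corps(C)$, finitely generated of transcendence degree one. The structure-constant functions, being regular on $\widetilde{C}$, lie in $B$ and still satisfy \eqref{EquationVarNambu}, so they define an $n$-Lie algebra $[\cdot,...,\cdot]_B$ over $B$ whose special fiber is $\mathcal{N}_0$ and whose generic fiber is isomorphic to $\mathcal{N}_1$, the generic point of $C$ being in the orbit.

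The main obstacle is the $(\Rightarrow)$ direction, where two points deserve care. First, the existence of the irreducible curve $C$ through $\mathcal{N}_0$ meeting the dense open orbit: this is where one invokes the irreducibility of $\overline{\vartheta(\mathcal{N}_1)}$ together with a Bertini-type generic-hyperplane-section argument, and it is the genuinely geometric input of the proof. Second, one must secure the generic-fiber isomorphism over the field $\mathcal{K}$ itself rather than merely geometrically; if the isomorphism to $\mathcal{N}_1$ is only defined over a finite extension $\mathcal{K}'/\mathcal{K}$, I would replace $B$ by the integral closure of $B$ in $\mathcal{K}'$ localized at a prime over the maximal ideal, again a discrete valuation $\corps$-algebra whose quotient field is finitely generated of transcendence degree one and whose residue field remains $\corps$ because $\corps$ is algebraically closed. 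Everything else --- skew-symmetry, the Nambu identity, and $B$-multilinearity of the bracket --- is automatic from the inclusion $C\subseteq\mathcal{L}ie^n_m$.
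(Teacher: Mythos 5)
The paper states this theorem without any proof, offering only the citation to Grunewald and O'Halloran and the remark that their characterization ``generalizes naturally'' to $n$-Lie algebras; your argument is exactly the orbit-closure/valuative argument of that source transported to the variety $\mathcal{L}ie^n_m$, and it is correct, including your explicit handling of the two genuinely delicate points (the existence of an irreducible curve through $\mathcal{N}_0$ meeting the dense open orbit in $\overline{\vartheta(\mathcal{N}_1)}$, and descending the generic-fibre isomorphism from $\overline{\mathcal{K}}$ to $\mathcal{K}$ by passing to the integral closure of $B$ in a finite extension, which keeps the residue field equal to $\corps$ since $\corps$ is algebraically closed). So your proposal is a faithful and sound reconstruction of the proof the paper implicitly relies on.
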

 We call such a degeneration, a global degeneration. A formal degeneration is defined as follows.
 
\begin{definition}
Let  $\N _{1}=(\N,[\cdot ,...,\cdot ]  _{1})$ be an $m$-dimensional  $n$-Lie algebra.
Let $t$ be a parameter in $\corps$ and $\{f_{t}\}_{t\neq 0}$\ be a continuous 
family of  invertible linear maps on $\N$\ over $\corps$. 

The limit (when it exists) of a sequence $f_{t}\cdot \N_{1}$\ , $%
\N_{0}=\lim_{t\rightarrow 0}f_{t}\cdot \N_{1}$, is a {\em formal degeneration \em} of $\N_{1}$ in the
sense that $\N _{0}$\ is in the Zariski closure of the set $\left\{ f_{t}\cdot
\N_1\right\} _{t\neq 0}.$
\\  The bracket  $[\cdot ,...,\cdot ]  _{0}$ is given by 
$$
\forall x_1,\cdots, x_n\in \N\quad [x_1 ,...,x_n ]_0 =\lim_{t\rightarrow 0}f_t^{-1}([f_t(x_1) ,...,f_t(x_n) ]_1.
$$
\end{definition}
We have the following observations.
\begin{enumerate}
\item The bracket  $[\cdot ,...,\cdot ]  _{t}=f_{t}^{-1}\circ [\cdot ,...,\cdot ] _{1} \circ f_{t}\times f_{t}$
satisfies Nambu identity. Thus, when $t$
tends to $0$ the condition remains satisfied.

\item  The linear map $f_{t}$\ is invertible when $t\neq 0$ \ and may be
singular when $t=0.$ Then, we may obtain by degeneration a new $n$-Lie algebra.

\item The definition of formal degeneration may be extended naturally to infinite dimensional case.

\item  When $\corps$ is the complex field, the multiplication given by the limit, follows 
from a limit of the structure constants,
using the metric topology. In fact, $f_{t}\cdot [\cdot ,...,\cdot ]_1 $ 
corresponds to a change of basis when $t\neq 0$. When $t=0$, they give eventually a new
point in $\mathcal{L}ie^n_{m}$.

\item  If $f_t$ is defined by a power serie the images  
of $%
f_{t}\cdot \N$ are in general in the Laurent power series ring $\N\left[
\left[ t,t^{-1}\right] \right] $.  But when the
degeneration exists, it lies in the power series ring $\N\left[ \left[ t\right]
\right] $. 
\item Every formal degeneration is a global degeneration.
\end{enumerate}

\begin{remark}
Rigid $n$-Lie algebras will  have a special interest,
an open orbit of a given $n$-Lie algebra is dense in the irreducible component
in which it lies. Then, its Zariski closure determines an irreducible component of $\mathcal{L}ie^n_{m}$, i.e. 
all $n$-Lie algebras in this irreducible component are degenerations of the rigid $n$-Lie algebra and there is no $n$-Lie
algebra which degenerates to the rigid $n$-Lie algebra. Two non-isomorphic rigid $n$-Lie algebras correspond to different
irreducible components. So the number of rigid $n$-Lie algebra classes gives a lower bound of the number of
irreducible components of $\mathcal{L}ie^n_{m}$. Note that not all irreducible components 
are Zariski closure of open orbits.
\end{remark}

\section{$n$-Lie-Poisson algebras and Quantization}

\subsection{ $n$-Lie-Poisson algebras}We introduce the notion of  $n$-Lie-Poisson algebra.

\begin{definition}
An  \emph{ $n$-Lie-Poisson algebra} is a triple $(\N,\mu,\{.,.,.\})$ consisting of  a $\mathbb{K}$-vector space $N$, a bilinear map $\mu:\N\times \N\rightarrow \N$ and an $n$-ary bracket  $\{\cdot,...,\cdot\}$ such that
\begin{enumerate}
  \item $(\N,\mu)$ is a binary commutative associative algebra,
  \item $(\N,\{\cdot,...,\cdot \})$ is a $n$-Lie algebra,
 \item the following Leibniz rule
 \begin{align*}
\{x_{1},....,x_{n-1},\mu(x_{n},x_{n+1})\}=\mu(x_{n},\{x_{1},...,x_{n-1},x_{n+1}\})+\mu(\{x_{1},....,x_{n}\},x_{n+1})
\end{align*}
\end{enumerate}
holds for  all $x_{1},...,x_{n+1}\in \N$.
\end{definition}

A morphism of $n$-Lie-Poisson algebras is a linear map that is a morphism of the underlying $n$-Lie algebras and associative algebras.

\begin{example}
Let $C^{\infty}(\mathbb{R}^{3})$ be the algebra of $C^{\infty}$ functions  on $\mathbb{R}^{3}$ and $x_{1},x_{2},x_{3}$ the coordinates on $\mathbb{R}^{3}$. We define the ternary brackets as in \eqref{jacobian}, then $(C^{\infty}(\mathbb{R}^{3}),\{.,.,.\})$ is a ternary 3-Lie algebra. In addition the bracket satisfies the Leibniz rule: $\{f g,f_{2},f_{3}\}=f \{g,f_{2},f_{3}\}+\{f,f_{2},f_{3}\} g$ where $f,g,f_2,f_3\in C^{\infty}(\mathbb{R}^{3})$ and  the multiplication being the pointwise multiplication  that is  $fg(x)=f(x) g(x)$. Therefore,  the algebra is a 3-Lie-Poisson algebra.\\
This algebra was considered already in 1973 by Nambu \cite{Nambu} as a possibility of extending the Poisson bracket of standard hamiltonian mechanics to bracket of three functions defined by the Jacobian. Clearly, the Nambu bracket may be generalized further to an $n$-Lie-Poisson allowing for an arbitrary number of entries.\\
\end{example}

\subsection{Quantization of Nambu Mechanics}
The quantization
problem of Nambu Mechanics was  investigated by Dito, Flato, Sternheimer and Takhtajan  \cite{Flato,Flato2}, see also \cite{CZ1,CZ2,CZ3}. Let $M$ be an $m$-dimensional $\mathbb{C} ^\infty$-manifold and  $\A$ be the algebra of smooth real-valued functions on $M$. 

Assume that $\A$ carries a structure of $n$-Lie-Poisson structure, where the commutative associative multiplication is the pointwise multiplication.  The skew-symmetry of the Nambu bracket and the Leibniz identity imply that there exists an $n$-vector field $\eta$ on $M$ such that 
\begin{equation}\label{NambuTensor}
\{f_1,....,f_n\}=\eta (df_1,...,df_n), \quad \forall f_1,....,f_n\in\A.
\end{equation}
An $n$-vector field is called a Nambu tensor if its associated Nambu bracket defined by \eqref{NambuTensor} satisfies the Nambu identity \eqref{NambuIdentity}.

\begin{definition}
 A Nambu-Poisson manifold $(M, \eta)$ is a manifold $M$ on which is defined a Nambu tensor $\eta$. Then $M$ is said to be endowed with a Nambu-Poisson structure.
\end{definition}

The dynamics associated with a Nambu bracket on $M$ is specified by $n - 1$ Hamiltonians $H_1,..., H_{n-1}\in\A$ and the time evolution of $f \in \A$ is given by 
\begin{equation}\label{Hamiltonians}
\frac{df}{dt} =\{H_1,...,H_{n-1},f\}.
\end{equation}

Then $f\in  \A$  is called an integral of motion for the system defined by \eqref{Hamiltonians} if it satisfies $\{H_1,...,H_{n-1},f\} =0$.

It follows from the Nambu identity  that a Poisson-like theorem exists for Nambu-Poisson manifolds: 
\begin{theorem}
The Nambu bracket of $n$ integrals of motion is also an integral of motion.
\end{theorem}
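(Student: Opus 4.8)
The plan is to recognize this statement as the Nambu-mechanical analog of the classical Poisson theorem and to prove it by a direct application of the fundamental identity \eqref{NambuIdentity}, which the Nambu bracket satisfies by the very definition of a Nambu tensor. The essential observation is that the time-evolution operator $L_H := \{H_1,\ldots,H_{n-1},\cdot\}$ is exactly an operator of the form $L_x$ introduced in \eqref{adjointMap}, and that the Nambu identity, rewritten as in the Remark following \eqref{NambuIdentity}, says precisely that $L_H$ is a derivation of the $n$-ary Nambu bracket.

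First I would set $x=(H_1,\ldots,H_{n-1})$ and apply \eqref{NambuIdentity} with the first $n-1$ slots occupied by the Hamiltonians and the remaining $n$ slots by the integrals of motion: concretely, take $x_k=H_k$ for $1\le k\le n-1$ and $x_{n-1+j}=f_j$ for $1\le j\le n$, so that the last $n$ arguments $x_n,\ldots,x_{2n-1}$ are exactly $f_1,\ldots,f_n$. Under this substitution the left-hand side of \eqref{NambuIdentity} becomes $\{H_1,\ldots,H_{n-1},\{f_1,\ldots,f_n\}\}=L_H(\{f_1,\ldots,f_n\})$, while each inner bracket on the right-hand side is $[x_1,\ldots,x_{n-1},x_i]=\{H_1,\ldots,H_{n-1},f_{i-n+1}\}=L_H(f_{i-n+1})$. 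Writing $j=i-n+1$, the right-hand side reads
$$\sum_{j=1}^{n} \{f_1, \ldots, f_{j-1}, \{H_1, \ldots, H_{n-1}, f_j\}, f_{j+1}, \ldots, f_n\} = \sum_{j=1}^{n} \{f_1, \ldots, L_H(f_j), \ldots, f_n\}.$$

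Next I would invoke the hypothesis that each $f_j$ is an integral of motion, i.e. $L_H(f_j)=\{H_1,\ldots,H_{n-1},f_j\}=0$ for every $j$. Since the Nambu bracket is $n$-linear, every summand contains the zero argument $L_H(f_j)$ and therefore vanishes, so the entire right-hand side is $0$. Combining this with the computation of the left-hand side yields $L_H(\{f_1,\ldots,f_n\})=\{H_1,\ldots,H_{n-1},\{f_1,\ldots,f_n\}\}=0$, which is exactly the assertion that $\{f_1,\ldots,f_n\}$ is an integral of motion.

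There is no genuine analytic or geometric obstacle here: the result is a purely algebraic consequence of the fundamental identity, and the only point that requires care is the index bookkeeping in matching the $2n-1$ arguments of \eqref{NambuIdentity} to the $n-1$ Hamiltonians and the $n$ integrals of motion. The conceptual content is entirely captured by the fact that \eqref{NambuIdentity} is designed so that $L_H$ acts as a derivation of the bracket; this is the structural reason that the classical Poisson theorem survives the passage to Nambu mechanics.
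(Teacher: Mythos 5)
Your proof is correct and follows exactly the route the paper indicates: the paper states only that the result ``follows from the Nambu identity,'' and your argument makes this precise by substituting the Hamiltonians $H_1,\dots,H_{n-1}$ into the first $n-1$ slots of \eqref{NambuIdentity} and the integrals $f_1,\dots,f_n$ into the remaining slots, so that each term on the right vanishes by multilinearity. The index bookkeeping is accurate and nothing is missing.
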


It turns out that a direct application of deformation quantization to Nambu- Poisson structures is not possible, a solution to the quantization
problem was  presented in the approach of Zariski quantization of fields
(observables, functions, in this case polynomials). Instead of looking at the deformed Nambu bracket as some skew-symmetrized form of an $n$-linear product,  the Nambu bracket is deformed directly. 

In the case of previous example, the usual Jacobian bracket  is replaced  by any $n$-ary bracket  having the preceding properties, we get a "modified Jacobian" which is still a Nambu bracket. That is to say, the "modified Jacobian" is skew- symmetric, it satisfies the Leibniz rule with respect to the new bracket and the Nambu identity is verified. 

The deformed bracket is given by 
\begin{equation*}
\lbrack f_{1},f_{2},f_{3}]=\underset{\sigma \in \emph{S}_{3}}{\sum
\varepsilon (}\sigma )\frac{\partial f_{1}}{\partial x_{\sigma _{1}}}\times
\frac{\partial f_{2}}{\partial x_{\sigma _{2}}}\times \frac{\partial f_{3}}{%
\partial x_{\sigma _{3}}},
\end{equation*}
where $S_3$ is the permutation group of $\{1,2,3\}$ and  $\varepsilon (\sigma )$ is the signature. In this approach the whole problem of quantizing Nambu-Poisson structure reduces to the construction of the deformed product $\times$. A non-trivial abelian deformation of the algebra of polynomials on $\mathbb{R}^m$ doesn't exist because of the vanishing of the second Harrison cohomology group. Nevertheless,  it is possible to construct an abelian associative deformation of the usual pointwise product of the following form
\begin{equation}\label{DeformProduct}
f\times _{\beta }g=T(\beta (f)\otimes \beta (g)),
\end{equation}%
where $\beta$ maps a real polynomial on $\mathbb{R}^3$ to the symmetric algebra constructed over the
polynomials on $\mathbb{R}^3$ $(\beta :\mathcal{A}\rightarrow Symm(\mathcal{A}))$. $T$ is an "evaluation map" which allows to go back to (deformed)
polynomials $(T:Symm(\mathcal{A})\longrightarrow \mathcal{A})$.

It replaces the (symmetric) tensor product  by a symmetrized form of
a "partial" Moyal product on $\mathbb{R}^3$ (Moyal product on a hyperplane in $\mathbb{R}^3$ with deformation
parameter $t$). The extension of the map $\beta$ to deformed polynomials by requiring
that it annihilates (non-zero) powers of $t$, will give rise to an Abelian deformation of
the usual product ($T$ restores a $t$-dependence). In general \eqref{DeformProduct} does not define an associative
product and we look for a $\beta$ which makes the product $\times_\beta$ associative.

\subsection{Ternary Virasoro-Witt
algebras}
Curtright, Fairlie and Zachos provided the following 
ternary $q$-Virasoro-Witt
algebras constructed through the use of $su(1,1)$ enveloping algebra
techniques.

\begin{definition}
 The ternary algebras defined on the linear space $VW$ generated by
 $\{ Q_n,R_n\}_{n\in\mathbb{Z}}$ and  the
skewsymmetric ternary brackets:
\begin{eqnarray}\label{CFZbracket1}
[ Q_k,Q_m,Q_n]&=& (k-m)(m-n)(k-n) R_{k+m+n}\\
\label{CFZbracket2} [Q_k,Q_m,R_n]&=& (k-m)(Q_{k+m+n}+ z n R_{k+m+n})\\
\label{CFZbracket3} [Q_k,R_m,R_n]&=& (n-m) R_{k+m+n}\\
\label{CFZbracket4} [R_k,R_m,R_n]&=& 0
\end{eqnarray}
is called \emph{ternary Virasoro-Witt algebras}.
\end{definition}

Actually the previous ternary algebra is a ternary Nambu-Lie algebra
only in the cases $z=\pm 2 \imath$.

T. A. Larsson showed in \cite{LarssonTA} that the above ternary
Virasoro-Witt algebras can be constructed by applying,  to the
Virasoro representation acting scalar densities (i.e. primary
fields), the ternary commutator bracket
\begin{eqnarray}\label{NambuCommutator1}
[x,y,z]&=&x\cdot [y,z]+y\cdot [z,x]+z\cdot [x,y]\\
\ &=&x\cdot(y\cdot z)-x\cdot(z\cdot y)+y\cdot(z\cdot
x)-y\cdot(x\cdot z)+z\cdot(x\cdot y)-z\cdot(y\cdot x)\nonumber
\end{eqnarray}
where the dot denotes the associative multiplication and $[\cdot,\cdot]$ the binary
commutator bracket of its corresponding Lie algebra. He considered the operators
\begin{eqnarray*}
  E_m &=& e^{\imath m x} \\
  L_m &=& e^{\imath m x}(-\imath \frac{d}{dx}+\lambda m) \\
  S_m &=& e^{\imath m x}(-\imath \frac{d}{dx}+\lambda m)^2
\end{eqnarray*}
which lead to the binary commutators
$$[L_m,L_n]=(n-m)L_{m+n},\quad [E_m,E_n]=n E_{m+n},\quad
[L_m,E_n]=0.
$$
 Therefore, one obtains  the ternary brackets
\begin{eqnarray}
 [L_k,L_m,L_n]&=& (\lambda-\lambda ^2)(k-m)(m-n)(n-k) E_{k+m+n}\\
 \ [L_k,L_m,E_n]&=& (m-k)(L_{k+m+n}+(1- 2\lambda) n E_{k+m+n})\\
 \ [L_k,E_m,E_n]&=& (m-n) E_{k+m+n}\\
\ [ E_k,E_m,E_n]&=& 0
\end{eqnarray}
The brackets involving S's are not needed to recover the ternary
Virasoro-Witt algebras. The brackets
(\ref{CFZbracket1}-\ref{CFZbracket4}) are obtained by taking
$$L_m=-\sqrt[4]{\lambda(1-\lambda )} Q_k, \quad
E_m=(\sqrt[4]{\lambda(1-\lambda )})^{-1} R_k, \quad z=\frac{1-2
\lambda}{\sqrt{\lambda(1-\lambda )}}.
$$
Naturally, these ternary algebras are  3-Lie algebras
only for $\lambda=\pm 2 \imath$.
\begin{remark}
One may notice that the ternary commutator \eqref{NambuCommutator1}
does not lead automatically to ternary Nambu-Lie algebra when
starting from an associative algebra and the corresponding Lie
algebra given by the binary commutators. See \cite{Awata} and
\cite{ams:ternary} for triple commutator leading to
3-Lie algebras and ternary Hom-Nambu-Lie algebras \cite{AM2008}. More general construction of $(n+1)$-Lie algebras induced by $n$-Lie algebras was studied in
 \cite{AMS2}
\end{remark}

\end{document}